\documentclass[a4paper,11pt]{amsart}
\usepackage[left=2.7cm,right=2.7cm,top=3.5cm,bottom=3cm]{geometry}

\usepackage{amsthm,amssymb,amsmath,amsfonts,mathrsfs,amscd,dsfont}
\usepackage[T1]{fontenc}
\usepackage{inputenc}
\usepackage[all,cmtip]{xy}
\usepackage{latexsym}
\usepackage{longtable}
\usepackage{hyperref}

\numberwithin{equation}{section}


\newfont{\cyr}{wncyr10 scaled 1100}
\newfont{\cyrr}{wncyr9 scaled 1000}

\theoremstyle{plain}
\newtheorem{theorem}{Theorem}[section]
\newtheorem{proposition}[theorem]{Proposition}
\newtheorem{lemma}[theorem]{Lemma}

\theoremstyle{definition}
\newtheorem{definition}[theorem]{Definition}
\newtheorem{assumption}[theorem]{Assumption}

\theoremstyle{remark}
\newtheorem{remark}[theorem]{Remark}

\newcommand{\Q}{\mathbb Q}
\newcommand{\N}{\mathds N}
\newcommand{\Z}{\mathbb Z}
\newcommand{\R}{\mathbb R}
\newcommand{\C}{\mathbb C}



\DeclareMathOperator{\End}{End}
\DeclareMathOperator{\Aut}{Aut}

\DeclareMathOperator{\Gal}{Gal}
\DeclareMathOperator{\GL}{GL}

\DeclareMathOperator{\Ind}{Ind}


\newcommand{\ord}{\mathrm{ord}}
\newcommand{\new}{\mathrm{new}}

\newcommand{\rec}{\mathrm{rec}}
\newcommand{\frob}{\mathrm{frob}}

\newcommand{\pwseries}[1]{[\![#1]\!]}

\newcommand{\Sha}{\mbox{\cyr{X}}}

\usepackage[usenames]{color}
\definecolor{Indigo}{rgb}{0.2,0.1,0.7}
\definecolor{Violet}{rgb}{0.5,0.1,0.7}
\definecolor{White}{rgb}{1,1,1}
\definecolor{Green}{rgb}{0.1,0.9,0.2}

\newcommand{\longmono}{\mbox{\;$\lhook\joinrel\longrightarrow$\;}}

\newcommand{\longepi}{\mbox{\;$\relbar\joinrel\twoheadrightarrow$\;}}


\newcommand{\smallmat}[4]{\bigl(\begin{smallmatrix}#1&#2\\#3&#4\end{smallmatrix}\bigr)}




\newfont{\gotip}{eufb10 at 12pt}

\newcommand{\p}{\mathfrak{p}}

\setcounter{tocdepth}{1}

\include{thebibliography}

\begin{document}

\title[Anticyclotomic $p$-adic $L$-functions for Hida families]{An explicit comparison of anticyclotomic $p$-adic $L$-functions for Hida families}
\today
\date{}
\author{Chan-Ho Kim}
\address{(Chan-Ho Kim) Center for Mathematical Challenges, Korea Institute for Advanced Study, 85 Hoegiro, Dongdaemun-gu, Seoul 02455, Republic of Korea}
\email{chanho.math@gmail.com}
\author{Matteo Longo}
\address{(Matteo Longo) Dipartimento di Matematica, Universit\`a di Padova, Via Trieste 63, 35121 Padova, Italy}
\email{mlongo@math.unipd.it}

\thanks{}

\begin{abstract}
The aim of this note is to compare several anticyclotomic $p$-adic $L$-functions for modular forms and $p$-adic families of ordinary modular forms, which have been defined and studied from different perspectives by Skinner--Urban, Hida, Perin-Riou, Bertolini--Darmon, Vatsal, Chida--Hsieh, Longo--Vigni, Castella--Longo and Castella--Kim--Longo. The main result of this paper is a comparison between the central critical twist  of the two-variable anticyclotomic $p$-adic $L$-function obtained as specialisation of the three-variable $p$-adic $L$-function of Skinner--Urban and the two-variable $p$-adic $L$-function introduced by one of the authors in collaboration with Vigni by means of $p$-adic families of Gross points. 
\end{abstract}

\subjclass[2010]{11F11, 11R23}

\keywords{Modular forms, Iwasawa Theory, $p$-adic $L$-functions}

\maketitle

\tableofcontents

\section{Introduction} 
\subsection{Overview}
Anticyclotomic $p$-adic $L$-functions attached to modular forms and $p$-adic families of modular forms have a long story and have been studied in many papers by several authors; among others, Hida \cite{Hida1}, Perrin-Riou \cite{PR}, Vatsal \cite{Vat1}, Bertolini--Darmon \cite{BD-Heegner, BDmumford-tate, bdIMC}, and, more recently, Skinner--Urban \cite{SU}, Lei--Loeffler--Zerbes \cite{LLZ}, Kings--Loeffler--Zerbes \cite{KLZ}, 
B\"{u}y\"{u}kboduk--Lei \cite{BL1}, \cite{BL2}, Longo--Vigni \cite{LV-MM}, \cite{LV-Pisa}, 
Castella--Longo \cite{cas-longo}, Castella--Kim--Longo \cite{CKL}, and, from a slightly different perspective, in Longo-Nicole \cite{LN}. 
However, it is rather unclear whether these constructions give us the \emph{entirely} same object since the choice of periods and the fudge factors in the interpolation formulas of these $p$-adic $L$-functions would depend on their own constructions.

To the best knowledge of the authors, the following two constructions have been  mainly used. 
\begin{itemize}
\item $p$-adic Rankin--Selberg convolution (\cite{Hida1, PR} for a single modular form, \cite{SU, Hida2} for families).
\item Gross points (\cite{BD-Heegner, BDmumford-tate, bdIMC, Vat1} for a single modular form, \cite{LV-MM, cas-longo, CKL} for families).
\end{itemize}

Our main goal is to clarify the relation between these different constructions of $p$-adic $L$-functions.
We start by recalling the three-variable $p$-adic $L$-function of Skinner--Urban \cite{SU}, and their two-variable and one-variable anticyclotomic specialisations; this $p$-adic $L$-function is constructed, as mentioned before, by means of $p$-adic Rankin--Selberg products for families of modular forms. We then recall the construction of the $p$-adic $L$-function via Gross points, following the description given by Chida--Hsieh \cite{ChHs1}, which generalises the constructions of Vatsal and Bertolini--Darmon.
In Theorem \ref{SU-CH}, we give a detailed proof of the well-known relation between the one-variable anticyclotomic specialisation of Skinner--Urban's $p$-adic $L$-function and Chida--Hsieh's $p$-adic $L$-function for the case of weight $2$ modular forms.
We then recall Hida--Perrin-Riou's one-variable anticyclotomic $p$-adic $L$-function, following the construction of B\"{u}y\"{u}kboduk--Lei \cite{BL1}, and compare it with (a twist of) the one-variable anticyclotomic Skinner--Urban $p$-adic $L$-function (Theorem \ref{HBL-SU}, which can also be obtained from results of \cite{BL1} via a more indirect method). 

The main result of this paper is to give a more detailed and complete proof of \cite[Theorem 5.3]{CKL} in which we compare the two-variable anticyclotomic specialisation of the central critical twist of the Skinner--Urban $p$-adic $L$-functions and the two-variable anticyclotomic $p$-adic $L$-function constructed by one of the authors of this paper in collaboration with Vigni in \cite{LV-MM} by means of compatible families of Gross points. 

\subsection{Statement of the main results} \label{subsec:main-result}

Our main result applies to $2$-variables $p$-adic $L$-functions associated with 
the primitive branch $\mathbb{I}$ of $p$-adic Hida families, of tame level $N$ and trivial character, whose residual representation $\bar\rho$ satisfies suitable arithmetic assumptions. We clarify the set of assumption that we need by specifying the properties satisfied of one of the arithmetic specialisations of $\mathbb{I}$.

Let $N \geq 1$ be an integer and $p\nmid 6N$ a prime number. Fix embeddings $\iota_p: \bar\Q\hookrightarrow\bar\Q_p$ and $\iota_\infty: \bar\Q\hookrightarrow \C$.
Let 
\[g=\sum_{n\geq 1}a_nq^n \in S_k(\Gamma_0(Np))\]
be a weight $k$ modular form of level $\Gamma_0(Np)$. 
Let $L / \Q_p$ be a finite field extension which contains $\iota_p(a_n)$ for all Fourier coefficients $a_n$ of $g$, and let $\rho:G_\Q\rightarrow\GL_2(L)$ 
be the $p$-adic representation attached to $g$, where $G_\Q=\Gal(\bar{\Q}/\Q)$. Choose a Galois-stable $\mathcal{O}_L$-lattice $T$, where $\mathcal{O}_L$ is the valuation ring of $L$; then 
$\rho$ is isomorphic to a representation (denoted with the same symbol)   
$\rho:G_\Q\rightarrow\GL_2(\mathcal{O}_L)$ with values in $\GL_2(\mathcal{O}_L)$. 
Denote by \[\bar\rho:G_\Q\longrightarrow\GL_2(k_L)\] the associated residual representation, where  
$k_L$ is the residue field of $\mathcal{O}_L$. The representation $\bar\rho$ \emph{depends} on the choice of the lattice $T$, but its semisimplification does not, so if $\bar\rho$ is irreducible, then it its isomorphism class does not depend on the choice of $T$. 

Let $K/\Q$ be a quadratic imaginary field of discriminant $D$ such that $(Np,D)=1$. Factor $N=N^+N^-$ where a prime number $\ell$ divides $N^+$ (respectively, $N^-$) if and only if $\ell$ is split 
(respectively, inert) in $K$. We say that $N=N^+N^-$ is the \emph{factorisation of $N$ associated with $K$}. 

\begin{assumption}\label{ass} The modular form $g=\sum_{n\geq 1}a_nq^n\in S_k(\Gamma_0(Np))$ (normalised with $a_1=1$), and the quadratic imaginary field $K$ satisfy the following conditions: 
\begin{enumerate}
\item $k\geq 2$ is an even integer.
\item $N\geq 1$ and $p\nmid 6N$. 
\item $g$ has trivial character. 
\item $g$ is a \emph{$p$-stabilized newform}: either $g$ is an ordinary newform of level $\Gamma_0(Np)$ or $g$ is the ordinary $p$-stabilisation of an ordinary newform $g_0$ of level $\Gamma_0(N)$.
\item $g$ is \emph{$p$-ordinary}: $\iota_p(a_p)$ is a $p$-adic unit. 
\item The discriminant $D$ of $K$ is prime to $Np$; denote by $N=N^+N^-$ the factorisation of $N$ associated with $K$. 
\item $N^-$ is a square-free product of an odd number of primes. 
\item $\bar\rho$ is irreducible. 
\item $\bar\rho$ is ramified at all primes dividing $N^+$ which are congruent to $1$ modulo $p$. 
\item $\bar\rho$ is ramified at all primes dividing $N^-$.
\item $p$ splits in $K$. 
\end{enumerate}
\end{assumption}

\begin{remark} 
Let $\bar\rho \vert_{G_{\Q_p}}$ be the restriction of $\bar\rho$ to $G_{\Q_p} = \Gal(\bar{\Q}_p/\Q_p)$, and denote by
$(\bar\rho \vert_{G_{\Q_p}} )^{ss}$ the semisimplification of $\bar\rho \vert_{G_{\Q_p}}$.
Recall that $\bar\rho$ is said to be 
\emph{ordinary} if $(\bar\rho \vert_{G_{\Q_p}} )^{ss} \simeq \varepsilon_1  \oplus \varepsilon_2$ for characters $\varepsilon_1$, $\varepsilon_2$.
Also, an ordinary representation $\bar\rho$ is said to be \emph{distinguished} if  $\varepsilon_1\not=\varepsilon_2$ in the above decomposition (see \cite[\S3.3.5]{SU}).
The conditions in Assumption \ref{ass} imply that $\bar\rho$ is both ordinary and distinguished.\end{remark}

Let
\[\mathbf{f}=\sum_{n\geq 1}\mathbf{a}(n)q^n \in\mathbb I\pwseries{q}\] be the primitive branch of the $p$-adic family of ordinary modular forms of tame level $N$ passing through $g$; so 
$\mathbb{I}$ is a local reduced finite integral extension of the Iwasawa algebra $\mathcal{O}_L\pwseries{W}$ (where $W$ is a formal variable) and there exists an arithmetic morphism $\phi:\mathbb{I}\rightarrow \mathcal{O}_L$ such that 
\[\phi(\mathbf{f})=\sum_{n\geq 1}\phi(\mathbf{a}_n)q^n=g.\]

Let $\Gamma_K^-\simeq\Z_p$ be the Galois group of the anticylotomic $\Z_p$-extension of $K$. 
Denote by $S$ the set of rational primes dividing $NDp$. We consider the following $p$-adic $L$-functions: \begin{itemize}
\item $L^\mathrm{SU}_{S,\mathbb{I}}\in \mathbb{I}\pwseries{\Gamma_K^-}$: this is the central critical twist  of the two-variable anticyclotomic specialisation of the three-variable $p$-adic $L$-function of Skinner--Urban (see Definition \ref{def su} for details on the construction of this $p$-adic $L$-function, and especially on the role of the critical twist). 
\item $L^\mathrm{LV}_{S,\mathbb{I}}\in  \mathbb{I}\pwseries{\Gamma_K^-}$: this is the two-variable $p$-adic $L$-function constructed by means of Gross points in \cite{LV-MM} and studied in \cite{cas-longo} and \cite{CKL}, with Euler factors at all primes dividing $S$ removed (see \S \ref{quaternionic section} for more details).  
\end{itemize}

\begin{theorem} \label{mainthm}
Under Assumption $\ref{ass}$, we have 
$(L^\mathrm{SU}_{S,\mathbb{I}})=(L^\mathrm{LV}_{S,\mathbb{I}})$ as ideals in 
$\mathbb{I}\pwseries{\Gamma_K^-}$.  
\end{theorem}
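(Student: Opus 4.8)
The plan is to prove the equality of ideals by comparing interpolation formulas at a dense set of arithmetic specialisations, reducing the two-variable statement to the already-established one-variable comparisons (Theorems \ref{SU-CH} and \ref{HBL-SU}), and invoking a Zariski-density/Weierstrass-preparation argument to upgrade the pointwise agreement into an equality of ideals in $\mathbb{I}\pwseries{\Gamma_K^-}$. More precisely, both $L^{\mathrm{SU}}_{S,\mathbb{I}}$ and $L^{\mathrm{LV}}_{S,\mathbb{I}}$ live in $\mathbb{I}\pwseries{\Gamma_K^-}$, and it suffices to exhibit them as generating the same ideal after specialising along every arithmetic morphism $\phi:\mathbb{I}\to\mathcal{O}_{L_\phi}$ in a set that is Zariski-dense in $\Spec\mathbb{I}$; since $\mathbb{I}$ is a finite reduced extension of $\mathcal{O}_L\pwseries{W}$, such a dense set exists, and a collection of elements of $\mathbb{I}\pwseries{\Gamma_K^-}$ that agree up to units after all such specialisations must themselves differ by a unit (one uses that the map $\mathbb{I}\pwseries{\Gamma_K^-}\to\prod_\phi \mathcal{O}_{L_\phi}\pwseries{\Gamma_K^-}$ is injective modulo the relevant radicals, together with the fact that $\mathbb{I}\pwseries{\Gamma_K^-}$ is a UFD-like Krull domain in each height-one localisation).

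The core computational step is then to pin down, for a fixed arithmetic $\phi$ corresponding to a $p$-stabilised newform $g_\phi$ of weight $k_\phi$, the specialisation $\phi(L^{\mathrm{SU}}_{S,\mathbb{I}})$ and $\phi(L^{\mathrm{LV}}_{S,\mathbb{I}})$ as elements of $\mathcal{O}_{L_\phi}\pwseries{\Gamma_K^-}$, and to identify each of them with the corresponding one-variable anticyclotomic $p$-adic $L$-function attached to $g_\phi$. On the Skinner--Urban side this is the content of Definition \ref{def su} together with the two-variable-to-one-variable specialisation recalled in the introduction; on the Longo--Vigni side one uses the interpolation property of the Gross-point construction of \cite{LV-MM}, as made explicit in \cite{cas-longo} and \cite{CKL}. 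Here one must be careful to keep track of: (i) the central critical twist built into $L^{\mathrm{SU}}_{S,\mathbb{I}}$, which shifts the variable so that the interpolation range is symmetric about the centre; (ii) the removal of the Euler factors at primes in $S$ on both sides, which must be shown to match factor-by-factor (for $\ell \mid N^+$ split, $\ell \mid N^- $ inert, $\ell\mid D$, and $\ell = p$); and (iii) the compatibility of the choice of periods — the canonical/Gross period on the quaternionic side versus the Hida congruence period implicit in the Rankin--Selberg construction — which is exactly where Assumption \ref{ass}, via irreducibility of $\bar\rho$ and the ramification hypotheses at $N^\pm$, guarantees that Ihara's lemma / multiplicity-one forces the two periods to agree up to a $p$-adic unit.

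With these identifications in hand, the weight-$2$ case of the pointwise equality is precisely Theorem \ref{SU-CH} (comparing the one-variable Skinner--Urban specialisation with the Chida--Hsieh $p$-adic $L$-function, which is the specialisation of $L^{\mathrm{LV}}_{S,\mathbb{I}}$ in weight $2$ up to the explicit interpolation factors), so I would first dispatch the weight-$2$ arithmetic points. To cover higher weights — needed for Zariski density — I would either invoke the analogous Chida--Hsieh interpolation formula in arbitrary even weight $k$ together with the higher-weight analogue of Theorem \ref{SU-CH}, or, more economically, observe that weight-$2$ arithmetic points of a fixed branch $\mathbb{I}$ through which such a $g$ passes are already Zariski-dense in $\Spec\mathbb{I}$ when they exist; if they do not (e.g. if $\mathbb{I}$ has no weight-$2$ specialisations) one falls back on the general even-weight comparison. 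Either way, the density of the chosen test set in $\Spec\mathbb{I}$, combined with the agreement of the two $p$-adic $L$-functions up to a unit at each test point, yields $(L^{\mathrm{SU}}_{S,\mathbb{I}}) = (L^{\mathrm{LV}}_{S,\mathbb{I}})$.

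\emph{Main obstacle.} I expect the genuinely hard part to be step (iii): showing that the period appearing in the Skinner--Urban Rankin--Selberg construction (a congruence period, or the Petersson norm of $g$ suitably normalised) matches the quaternionic Gross period used in \cite{LV-MM} up to a $p$-adic unit that is \emph{uniform in $\phi$}, i.e. interpolates across the family. Pointwise this is known by Hida's comparison of congruence modules under the Jacquet--Langlands correspondence (using precisely the irreducibility and ramification hypotheses in Assumption \ref{ass} to ensure freeness of the relevant Hecke/cohomology module), but promoting it to a statement over $\mathbb{I}$ — so that the ratio of the two-variable objects is a unit in $\mathbb{I}\pwseries{\Gamma_K^-}$ rather than merely a unit at each arithmetic point — requires controlling the congruence ideal of $\mathbf{f}$ along the branch and invoking Hida's freeness results for the big ordinary Hecke algebra. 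A secondary, more bookkeeping-heavy difficulty is matching the finite lists of removed Euler factors and the precise normalisation of the cyclotomic-to-anticyclotomic variable change in the central critical twist; this is where most of the routine-but-delicate computation will go.
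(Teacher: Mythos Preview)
Your overall strategy matches the paper's: reduce to the one-variable equality $(L^{\mathrm{SU}}_{S,f})=(L^{\mathrm{LV}}_{S,f})$ at each arithmetic specialisation and then upgrade to the two-variable statement, and you correctly isolate the period comparison as the crux. Two remarks on the one-variable step: first, weight-$2$ specialisations with trivial character are never Zariski-dense in $\Spec\mathbb{I}$ (there are only finitely many), so the higher-weight comparison is not optional; second, the paper does not prove a direct higher-weight analogue of Theorem~\ref{SU-CH} but instead routes through the Hida--B\"uy\"ukboduk--Lei $L$-function, combining Theorems~\ref{Hida-CH} and~\ref{HBL-SU} with the identification $(L^{\mathrm{LV}}_f)=(L^{\mathrm{CH}}_f)$ from \cite{CKL} to obtain $(L^{\mathrm{SU}}_{S,f})=((\eta_{f}/\eta_{f,N^-})\cdot L^{\mathrm{LV}}_{S,f})$, and then removes the congruence-ideal factor using Hsieh's family-level result that $\eta_{\mathbb{I}}/\eta_{\mathbb{I},N^-}\in\mathbb{I}^\times$.

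The genuine gap is in the upgrading step. Your assertion that two elements of $\mathbb{I}\pwseries{\Gamma_K^-}$ generating the same ideal after every arithmetic specialisation must differ by a global unit is not justified by the Krull-domain remark: the specialised units could vary with $\phi$ and need not patch, and $\mathbb{I}$ is not assumed normal. The paper's argument supplies the missing idea. One first checks, via vanishing of the $\mu$-invariants of the specialisations, that neither element lies in $\mathfrak{m}_{\mathbb{I}}\cdot\mathbb{I}\pwseries{\Gamma_K^-}$, so Weierstrass preparation over the complete local ring $\mathbb{I}$ produces distinguished polynomials $G^{\mathrm{SU}},G^{\mathrm{LV}}\in\mathbb{I}[X]$ generating the respective ideals. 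For each $\phi$, the specialisations $\phi(G^{\mathrm{SU}})$ and $\phi(G^{\mathrm{LV}})$ are \emph{monic} polynomials generating the same ideal in $\mathcal{O}\pwseries{\Gamma_K^-}$, hence are \emph{equal} rather than merely associate. Coefficientwise one is reduced to showing that $x\in\mathbb{I}$ with $\phi(x)=0$ for all arithmetic $\phi$ must vanish, which the paper proves by a going-down argument over $\Lambda_W$. This passage from ``associate after specialisation'' to ``equal after specialisation'' via monicity is the step your sketch does not supply.
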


Applications of this result to some conjectures stated in \cite{LV-MM} on the relation between $ L^\mathrm{LV}_\mathbb{I}$ and the characteristic ideal of the Selmer group of Hida's big ordinary Galois representation attached to $\mathbf{f}$ is given in \cite{KL}. 
The proof of Theorem \ref{mainthm} exploits the interpolation formulas for special values of complex $L$-functions enjoyed by these $p$-adic $L$-functions. However, a direct comparison between the interpolation properties of $L^\mathrm{SU}_{S,\mathbb{I}}$ and $L^\mathrm{LV}_{S,\mathbb{I}}$ is not available, because these two functions interpolate 
different special values. Our strategy is to first compare these functions with other anticyclotomic $p$-adic $L$-functions introduced by Chida-Hsieh and B\"{u}y\"{u}kboduk--Lei, and then use the resulting relations to prove Theorem \ref{mainthm}.

As a general notation, we denote $f=\phi(\mathbf{f})$ the specialisations of $\mathbf{f}$ at an arithmetic morphism $\phi:\mathbb{I}\rightarrow\bar\Q_p$, leaving the symbol $g$ for our fixed modular form as before, which will be one of these specialisations; when $f=\phi(\mathbf{f})\in S_k(\Gamma_0(Np))$, we will usually denote $f_0$ the modular form of level $\Gamma_0(N)$ whose ordinary $p$-stabilisation is $f$ when $f$ is old at $p$, and let $f_0=f$ when $f$ is new at $p$; observe that for $k\geq 4$, any ordinary $p$-stabilised newform $f$ of weight $k$, level $\Gamma_0(Np)$ and trivial character is the ordinary $p$-stabilisation of a form $f_0\in S_k(\Gamma_0(N))$ of weight $k$, level $\Gamma_0(N)$ and trivial character.  
 
\subsection*{Acknowledgments} The authors would like to thank the referee for his/her careful reading of the paper, which led to a correction of a mistake in the previous version of the paper and to a significative improvement of the exposition.
The authors would like to thank F. Castella, M.-L. Hsieh, K. B\"{u}y\"{u}kboduk and A. Lei for useful discussions.  
C.-H. K. was partially supported 
by a KIAS Individual Grant (SP054103) via the Center for Mathematical Challenges at Korea Institute for Advanced Study and
by the National Research Foundation of Korea (NRF) grant funded by the Korea government(MSIT) (No. 2018R1C1B6007009). M. L. was partially supported by  PRIN 2017 \emph{Geometric, algebraic and analytic methods in arithmetic} and INDAM GNSAGA. 

\section{Hecke characters}

\subsection{Algebraic Hecke characters} 

Let $K$ be an imaginary quadratic field of discriminant $D$.
Denote by $\mathfrak{d}$ the different ideal of $K$, by $\mathcal{O}_K$ the ring of its algebraic integers. For any place $v$ of $K$, let $K_v$ be the completion of $K$ at $v$ and denote $\mathcal{O}_{K_v}$ the valuation ring of $K_v$. For any finite place $v$ of $K$, denote $\mathfrak{p}_v$ the associated prime ideal and let $\varpi_v$ be a uniformiser of $\mathcal{O}_{K_v}$.   
Let $\mathbb{A}_K^\times$ be the idele group of $K$ and write an element $x$ of $\mathbb{A}_K^\times$ as $x=(x_v)_v$ where $v$ ranges over all valuations of $K$ and $x_v\in K_v$. Write $\mathbb{A}_{K,f}^\times$ for the finite adeles of $\mathbb{A}_K^\times$, and write an element $x=(x_v)_v$ as $x=(x_f,x_\infty)$ with $x_f\in\mathbb{A}_{K,f}^\times$ and $x_\infty\in K_\infty^\times=(K\otimes_\Q \R)^\times$. 

A \emph{Hecke character} \[\chi:\mathbb{A}_K^\times/K^\times\longrightarrow\C^\times\] of $K$ is a continuous group homomorphism 
$\chi:\mathbb{A}_K^\times\rightarrow\C^\times$ (denoted with the same symbol $\chi$) that is trivial on the image of $K^\times$ into $\mathbb{A}_K^\times$ via the diagonal embedding $x\mapsto (x_v)_v$ with $x_v=x$ for all $v$. For any place $v$ of $K$
write $\chi_v:K_v^\times\rightarrow\C^\times$ for the restriction of $\chi$ to the image of $K_v$ into $\mathbb{A}_K^\times$ via the map $x\mapsto (x_w)_w$ with $x_w=1$ if $w\neq v$ and $x_v=x$. We often write $\chi=\bigotimes_v\chi_v$.

Let $\chi:\mathbb{A}_K^\times/K^\times\rightarrow\C^\times$ be a Hecke character. We say that $\chi$
is \emph{algebraic} if the restriction $\chi_\infty:K_\infty^\times\rightarrow\C^\times$ of $\chi$ to the infinity component $K_\infty^\times$ of $\mathbb{A}_K^\times$ has the form $\chi_\infty(z)=z^{t_1}\bar{z}^{t_2}$ for a pair of integers $(t_1,t_2)\in\Z\times\Z$; in this case, 
we say that the algebraic Hecke character $\chi$ has \emph{infinity type} $(t_1,t_2)\in\Z\times\Z$.
We say that an algebraic Hecke character $\chi$ is \emph{unitary} if $\chi:\mathbb{A}_K^\times\rightarrow \mathbb{S}^1$ where $\mathbb{S}^1$ is the unit circle, 
and that $\chi$ is  \emph{anticyclotomic} if $\chi(x)=1$ for all $x\in\mathbb{A}_\Q^\times$.
Note that unitary algebraic Hecke characters have infinity type $(m,-m)$ for some integer $m$. Let $w_\chi=t_1+t_2$ be the \emph{weight} of $\chi$. 
The \emph{conductor} of an algebraic Hecke character $\chi$ is the $\mathcal{O}_K$-ideal $\mathfrak{f}_\chi=\prod_v\p_v^{e_v}$ (the product is over all finite places of $K$) where $e_v\in\Z$  
is the smallest non negative integer such that $\chi_v$ is trivial on $1+\p_v^{e_v}$ (note that $e_v=0$ for all $v$ except possibly a finite number, and therefore the definition of $\mathfrak{f}_\chi$ makes sense).

For each prime number $\ell$, fix an embedding 
$\iota_\ell: \bar\Q\rightarrow\bar\Q_\ell$. 
Let $\chi:\mathbb{A}_K^\times/K^\times\rightarrow\C^\times$ be an algebraic Hecke character of infinity type $(t_1,t_2)$. 
Denote by $\alpha_\ell^{(t_1,t_2)}:K_\ell^\times\rightarrow\bar\Q_\ell^\times$ the continuous character uniquely determined by the condition 
$\alpha_\ell^{(t_1,t_2)}(x\otimes 1)=\iota_\ell^{t_1}(x)\iota_\ell^{t_2}(\bar{x})$, 
where $K_\ell=K\otimes_\Q\Q_\ell$ and $x\mapsto \bar{x}$ denotes the 
action of the non-trivial automorphism of $\Gal(K/\Q)$. 
Define the \emph{$\ell$-adic avatar} 
\[\hat\chi_\ell:
\mathbb{A}_{K,f}^\times/K^\times\longrightarrow\bar\Q_\ell^\times\] of $\chi$ to be the continuous character $\hat{\chi}_\ell=\chi\cdot\alpha_\ell^{(t_1,t_2)}$. Since 
$\Q_\ell^\times$ is a totally disconnected topological space, $\ker(\hat\chi_\ell)$ contains the connected component of $1$ in $\mathbb{A}_{K,f}^\times$, and therefore gives rise to a continuous character 
\[\sigma_{\chi,\ell}:\Gal(K^\mathrm{ab}/K)\longrightarrow \bar\Q_\ell^\times\]
such that $\sigma_{\chi,\ell}\circ\rec_K=\hat\chi_\ell$, where $K^\mathrm{ab}$ is the maximal abelian extension of $K$, and  
\[\mathrm{rec}_ K :\mathbb A_ K ^\times/ K ^\times\longepi
\Gal( K ^\mathrm{ab}/ K )\] is the geometrically normalized 
reciprocity map of class field theory (so $\mathrm{rec}_ K(\varpi_v)=\frob_v$ where $\frob_v$ is a geometric Frobenius element at $v$). We may also view 
$\sigma_{\chi,\ell} :G_K \rightarrow\bar\Q^\times_\ell$ as a character of 
the absolute Galois group $G_K=\Gal(\bar{K}/K)$ of $K$ via the canonical projection $G_ K \twoheadrightarrow G_ K ^\mathrm{ab}$, 
where $G_K^\mathrm{ab}=\Gal(K^\mathrm{ab}/K)$; also, 
$\sigma_{\chi,\ell}$ takes values in $E_{\chi,\ell}^\times$, where $E_{\chi,\ell}$ is a finite field extension of $\Q_\ell$. 
See \cite[Ch. I, \S2.3]{serre-abelian}, or \cite[Lecture 3, \S 3]{rohrlich-root-numbers}.

\begin{remark}
When we consider the interpolation formula for anticyclotomic $p$-adic $L$-functions of modular forms, the $p$-adic characters are assumed to be locally algebraic as reviewed in $\S$\ref{CH}. Note that the complex avatars of $p$-adic locally algebraic anticyclotomic characters are algebraic Hecke characters.
\end{remark}

\subsection{Gauss sums} Let $\chi=\bigotimes\chi_v:\mathbb{A}_K  ^\times\rightarrow\C^\times $ be an algebraic Hecke character. 
For each prime $\ell$ of $\Q$, we write $ \chi_\ell$ for the character 
of $K _\ell^\times$  given by the product of 
the characters $ \chi_v$ for $v\mid \ell$, 
where $K _\ell=K \otimes_\Q\Q_\ell=\prod_{v\mid\ell}K _v$.
Denote by $c_{ \chi_\ell}$ the conductor ideal of $ \chi_\ell$, where, following \cite[\S8.1.1]{SU}, this is an ideal in $\mathcal{O}_\ell=\mathcal{O}_ K \otimes_\Z \mathds{Z}_\ell$; so when $\ell$ splits in $K$,  $c_{\chi_\ell}$ is identified with a pair of residue classes of integers in $\mathcal{O}_\ell\simeq\Z_\ell\times\Z_\ell$. Let $d_\ell$ be a generator of the ideal $\mathfrak{d}\mathcal{O}_\ell$. Let 
$e_\infty(x)=e(x)=e^{2\pi i x}$ and $e_\ell(1/\ell)=e(-1/\ell)$ for rational primes $\ell$ be the standard additive characters (\cite[\S8.1.2]{SU}). 
The \emph{local Gauss sum} of $ \chi_\ell$ is given by 
\[\mathfrak{g}( \chi_\ell,c_{ \chi_\ell} d_\ell)=\sum_{a\in(\mathcal{O}_\ell/c_{ \chi_\ell})^\times} \chi_\ell(a)\cdot e_\ell\left(\mathrm{Tr}_{K_\ell/\Q_\ell}\left(\frac{a}{c_{ \chi_\ell}d_\ell}\right)\right),\]
and the \emph{global Gauss sum} of $\chi$ is defined by 
\[\mathfrak{g}( \chi)=\prod_\ell \chi_\ell^{-1}(c_{ \chi_\ell}d_\ell)\cdot \mathfrak{g}( \chi_\ell,c_{ \chi_\ell}d_\ell).\]

\begin{lemma}\label{gauss} 
Let $\chi$ be an anticyclotomic unitary Hecke character of conductor $p^n\mathcal{O}_K$.
Assume   $p=\p\bar\p$ is split in $ K  $.Then $\mathfrak{g}( \chi)=\pm p^{n}$. \end{lemma}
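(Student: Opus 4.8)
The plan is to reduce the global Gauss sum to a product of local Gauss sums at $p$ and to compute the absolute value and the local contributions away from $p$. Since $\chi$ has conductor $p^n\mathcal{O}_K$, for every prime $\ell\neq p$ the character $\chi_\ell$ is unramified, so $c_{\chi_\ell}=\mathcal{O}_\ell$ and the sum defining $\mathfrak{g}(\chi_\ell,c_{\chi_\ell}d_\ell)$ degenerates: one has $\mathfrak{g}(\chi_\ell,d_\ell)=1$ and $\chi_\ell^{-1}(d_\ell)\cdot 1 = \chi_\ell^{-1}(d_\ell)$. A short check, using that $\chi$ is anticyclotomic (hence $\chi$ is trivial on $\mathbb{A}_\Q^\times$, in particular on $\widehat{\Z}^\times$ and on $\Q^\times$ embedded diagonally), shows that the product $\prod_{\ell\neq p}\chi_\ell^{-1}(d_\ell)$ equals a sign — indeed the different $\mathfrak{d}$ is prime to $p$ and this product reduces to $\chi$ evaluated on the idele representing $\mathfrak{d}$, which is fixed by complex conjugation up to a rational idele, so anticyclotomicity forces it to be $\pm1$. (This is exactly the point where the hypothesis ``anticyclotomic'' and ``unitary'' are used; unitarity guarantees all the local terms have absolute value as expected and there is no spurious archimedean factor.)

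\textbf{Next I would} isolate the local factor at $p$. Write $\mathfrak{g}(\chi)=\bigl(\pm1\bigr)\cdot\chi_p^{-1}(c_{\chi_p}d_p)\cdot\mathfrak{g}(\chi_p,c_{\chi_p}d_p)$. Because $p=\mathfrak{p}\bar{\mathfrak{p}}$ splits, $\mathcal{O}_p\simeq\Z_p\times\Z_p$ and $\chi_p=(\chi_{\mathfrak{p}},\chi_{\bar{\mathfrak{p}}})$; the conductor $p^n\mathcal{O}_K$ translates into $c_{\chi_p}=(p^a,p^b)$ with, a priori, $\max(a,b)=n$. Here anticyclotomicity enters again: $\chi$ trivial on $\mathbb{A}_\Q^\times$ forces $\chi_{\bar{\mathfrak{p}}}=\chi_{\mathfrak{p}}^{-1}$ after the identification $\overline{(\cdot)}$, so the two local conductors are equal, $a=b=n$, and $\mathfrak{g}(\chi_p,c_{\chi_p}d_p)$ factors as the product of the two one-variable Gauss sums $\mathfrak{g}(\chi_{\mathfrak{p}},p^n d_{\mathfrak{p}})\cdot\mathfrak{g}(\chi_{\bar{\mathfrak{p}}},p^n d_{\bar{\mathfrak{p}}})$, each a classical Gauss sum attached to a primitive character of $(\Z_p/p^n\Z_p)^\times$. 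The standard Gauss sum identity gives $|\mathfrak{g}(\chi_{\mathfrak{p}},p^n d_{\mathfrak{p}})|=p^{n/2}$ and likewise for $\bar{\mathfrak{p}}$, so the product has absolute value $p^n$. It then remains to see that the product $\chi_p^{-1}(c_{\chi_p}d_p)\cdot\mathfrak{g}(\chi_p,c_{\chi_p}d_p)$ is in fact $\pm p^n$ rather than merely of that absolute value; this follows because $\chi_{\bar{\mathfrak{p}}}=\chi_{\mathfrak{p}}^{-1}$ makes the two conjugate Gauss sums complex-conjugate (up to an explicit root of unity coming from $d_p$ and the normalisation of $e_p$), so their product is a nonnegative real number, namely $p^n$, and the sign comes entirely from the away-from-$p$ factor computed above.

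\textbf{I expect the main obstacle} to be the careful bookkeeping of normalisations: the additive character $e_\ell(1/\ell)=e(-1/\ell)$ used by Skinner--Urban, the generator $d_\ell$ of $\mathfrak{d}\mathcal{O}_\ell$, and the precise form of the splitting $\mathcal{O}_p\simeq\Z_p\times\Z_p$ all contribute root-of-unity twists to the local Gauss sums, and one must check these twists cancel in the conjugate pair at $p$ and combine into a genuine sign (not just a unit-modulus complex number) away from $p$. Disentangling these conventions — and confirming that anticyclotomicity does force $c_{\chi_{\mathfrak{p}}}=c_{\chi_{\bar{\mathfrak{p}}}}=p^n$ exactly, so there is no ``defect'' making the conductor asymmetric — is the delicate part; once the normalisations are pinned down, the absolute-value computation is the classical Gauss sum evaluation and the rest is formal. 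The sign itself is not pinned down further, consistent with the ``$\pm$'' in the statement.
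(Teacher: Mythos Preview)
Your plan is correct and follows essentially the same route as the paper: reduce to local factors, observe that away from $p$ only the primes dividing the different contribute and anticyclotomicity forces those contributions to be $\pm 1$, then at $p$ use $\chi_{\bar{\mathfrak p}}=\chi_{\mathfrak p}^{-1}$ (again from anticyclotomicity) to pair the two local Gauss sums. One small refinement: the product $\mathfrak{g}(\chi_{\mathfrak p})\cdot\mathfrak{g}(\chi_{\bar{\mathfrak p}})$ is not literally $|\mathfrak{g}(\chi_{\mathfrak p})|^2$ but rather $\chi_{\mathfrak p}(-1)\cdot p^n$ (the paper invokes \cite[Proposition~2.2(ii)]{Martinet} for this), so there is also a sign $\chi_{\mathfrak p}(-1)=\pm1$ contributed at $p$, not only away from $p$; this does not affect the conclusion.
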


\begin{proof}
We first claim that $ \chi _\p= \chi _{\bar{\p}}^{-1}$.
Let $\mathbf{x} = (1, \cdots, 1, x, 1, \cdots )\in\mathbb{A}_{\Q}^\times$ where $x\in\Q_p^\times$ and all other components of $\mathbf{x}$ are equal to $1$.
Denote by $\mathbf{x}_K$ the image of $\mathbf{x}$ in $\mathbb{A}_K^\times$.
Then both components of $\mathbf{x}_K=(x_v)$ at $\p$ and ${\bar{\p}}$ are $x$ and all other components of $\mathbf{x}_K$ equal to $1$.
Since $\chi$ is anticyclotomic, $\chi$ is trivial on $\mathbb{A}_\Q^\times$.
Thus, we have 
\[ \chi _p(\mathbf{x}_K)= \chi _{\p}(x)\cdot \chi _{\bar{\p}}(x)=\prod_{w\nmid p} \chi _w^{-1}(1)=1 . \]
 The claim now follows from the isomorphism 
$\mathcal{O}_p/p^n\mathcal{O}_p\simeq \Z/p^n\Z\oplus \Z/p^n\Z$ 
since $p$ splits in $K$.
In particular, since $\chi$ is unitary,
we have  
 \begin{equation}\label{gauss-1}\mathfrak{g}(\bar\chi_\p)=\mathfrak{g}( \chi_{\bar\p})
\end{equation}
where we write $\bar\chi_\p(x)=\overline{\chi_\p(x)}$, 
$\mathfrak{g}( \chi_\p)=\mathfrak{g}( \chi_\p,p^n)$ and 
$\mathfrak{g}( \chi_{\bar\p})=\mathfrak{g}( \chi_{\bar\p},p^n)$. 

We now compute the Gauss sum. Since the conductor of $ \chi$ is $p^n$, all terms $\mathfrak{g}( \chi_\ell,c_{ \chi_\ell}d_\ell)$ with $\ell\neq p$ are equal to $1$. 
Also, $ \chi_\ell(p^nd_\ell)=1$ for all $\ell\neq Dp$. If $v\mid \ell$ is a prime ideal of $ K  $ dividing $\mathfrak{d}$, where $\ell$ is a rational prime, then $(d_\ell^2)=(\ell)$. Since $ \chi$ is anticyclotomic, this forces $ \chi_v(d_\ell)=\pm1$. 
Finally, we study the local Gauss sum at $p$. Since $p\nmid D$, we have $(d_p)=(1)$, and, using that $p$ splits in $K$, one easily shows that 
\begin{equation}\label{gauss0}
\mathfrak{g}( \chi)=\mathfrak{g}( \chi_\p)\cdot\mathfrak{g}( \chi_{\bar\p}).\end{equation}
By \cite[Proposition 2.2(ii)]{Martinet}, using that $\mathfrak{f}_\chi=p^n$, we have 
\begin{equation}\label{martinet}
\mathfrak{g}( \chi_\p)\cdot\mathfrak{g}(\bar \chi_\p)= \chi_\p(-1)\cdot p^n,\end{equation} and combining \eqref{gauss-1}, \eqref{gauss0} and \eqref{martinet} completes the proof because $ \chi_\p^2(-1)= \chi_\p(1)=1$.   
\end{proof}

\section{Complex $L$-functions} 

\subsection{$L$-functions of modular forms}
Let 
$f=\sum_{n\geq 1}a_nq^n \in S_k(\Gamma_0(N))$ be a normalized eigenform where $q=e^{2\pi i z}$.
The complex $L$-function of $f$
$$L(f,s)=\sum_{n=1}^\infty\dfrac{a_n}{n^s}$$
converges absolutely for $s \in\C$ with $\Re(s)>k/2+1$, satisfies a functional equation, and extends to an entire function.
{In each domain $\Re(s)\geq k/2+1+\delta$ for $\delta>0$}, 
$L(f,s)$ admits the Euler product
\[L(f,s)=\prod_\ell  L_\ell (f,s).\]
where 
$L_\ell (f,s)=(1-a_\ell \ell^{-s}+  \ell^{k-1-2s})^{-1}$
and $\ell$ runs over all primes.
We refer to \cite[$\S$5.9 and 5.10]{DS}) for details.

\subsection{$L$-functions of Hecke characters} 
Let $K/\Q$ be a quadratic imaginary field of discriminant $D$, and let 
$\chi:\mathbb A_ K^\times/K^\times\rightarrow \mathbb \C^\times$ be an algebraic Hecke character of infinity type $(m,n)$. 
Denote by $\mathfrak f_\chi$ conductor of $\chi$.  
Write $\N=\N_{K/\Q}$ for the norm operator on ideals of $K$; we shall adopt the same symbol for the corresponding map on ideles, and for the norm map in Galois theory, but the context will clarify the meaning.  
Recall the compatible system of Galois representations 
$\{\sigma_{\chi,\ell}\}_\ell$ associated with $\chi$ and 
define the $L$-function of $\chi$ for $s\in\C$ with $\Re(s)>1+w_\chi/2$ to be 
\[L(\chi,s)=\prod_{v\nmid \mathfrak{f}_\chi}L_v(\chi,s)\]
where  
$L_v(\chi,s)=(1-\sigma_{\chi,\ell}(\frob_v)\N(\p_v)^{-s})^{-1}$ and $v$ runs over all finite places of $K$ which do not divide the conductor $\mathfrak{f}_\chi$ of $\chi$; here note that, since $\{\sigma_{\chi,\ell}\}$ is a strictly compatible system of Galois representations, and $\sigma_{\chi,\ell}$ is unramified at all $v\nmid \mathfrak{f}_\chi$, we have 
$\sigma_{\chi,\ell}(\frob_v)\in\bar\Q$ for all such $v$, and we view $\sigma_{\chi,\ell}(\frob_v)\in\C$ via the fixed embedding $\iota_\infty:\bar\Q\hookrightarrow\C$. The complex function $L(\chi,s)$ of $s$ can be extended to a meromorphic  function on the whole $\C$, satisfying a functional equation which relates $L(\chi,s)$ and $L(\bar{\chi},w_\chi+1-s)$, where $\bar{\chi}(x)=\overline{\chi(x)}$, and here $x\mapsto \bar{x}$ denote the complex conjugation; see \cite[Chapter 0, \S6]{Shappacher} or \cite[Lecture 3, \S3]{rohrlich-root-numbers} for details. 

\subsection{Theta series}\label{sectheta}
As in the previous \S, let $K/\Q$ be a quadratic imaginary field of discriminant $D$. Let 
$\chi:\mathbb A_ K^\times/K^\times\rightarrow \mathbb \C^\times$ be an algebraic Hecke character of conductor $\mathfrak{f}_\chi$ and infinity type $(m,0)$. 
We put 
\[g_\chi(z):=\sum_{\mathfrak a}\chi(\mathfrak a)e^{2\pi i \N(\mathfrak a)z}\] 
where the sum runs over all integral ideals $\mathfrak{a}$ of $K$.
The formal series above can also be written 
as 
$g_\chi(q)= \sum_{n\geq 0}b_nq^n$  
with 
$b_n=\sum_{\N(\mathfrak a)=n}\chi(\mathfrak a)$. 
Then $g_\chi$ is a modular form of weight $m+1$, level $M=|D|\cdot\N(\mathfrak{f}_\chi)$ and character $\psi_{g_\chi}$ defined for $x\in \Z$ by 
\[\psi_{g_\chi}(x)=\chi_{D}(x)\chi((x))\mathrm{sgn}(x)^M\]
where $\chi_D$ is the quadratic character associated to $K$.
 Also, $g_\chi$ is a cusp form unless $m=0$ and $\chi$ is the composition of $\N$ with a Dirichlet character. See \cite[Theorem 4.8.2]{Miy}.

\begin{remark}\label{remark-infinity-types}
The formulation in \cite[Theorem 4.8.2]{Miy} and other references, \emph{e.g.} \cite[\S12.3]{Iwaniec}, is slightly different. 
One fixes a Hecke character $\xi$ such that $\xi((a))=(a/|a|)^u$ for $a\equiv 1\pmod{\mathfrak{f}}$ and considers the modular form 
$\sum_{\mathfrak a}\xi(\mathfrak a)\N(\mathfrak a)^{u/2}e^{2\pi i \N(\mathfrak a)z}$.
Since such a $\xi$ has infinity type $(u/2,-u/2)$, we see that 
$\chi=\xi\cdot\N^{u/2}$ has infinity type $(u,0)$, and when $g_\chi$ is a cusp form, we have \[L(g_\chi,s)=L(\xi,s+u/2).\] \end{remark}

By \cite[p. 214]{Iwaniec} and taking into account the previous Remark \ref{remark-infinity-types}, $g_\chi$ satisfies the following transformation formula: 
\[(g_{\chi})_{|m+1}W_{|D|\cdot\mathrm{N}(\mathfrak{f}_\xi)}= \frac{\mathfrak{g}(\xi)}{i^{2m-1}\N(\mathfrak{f}_\xi)^{1/2}}\cdot g_{\bar\chi},\]
where $\mathfrak{f}_\xi$ is the conductor of $\xi$ and $W_{|D|\cdot\mathrm{N}(\mathfrak{f}_\xi)}=\smallmat 0{-1}{|D|\cdot\mathrm{N}(\mathfrak{f}_\xi)}{0}$ is the Atkin--Lehner involution; note that in \emph{loc. cit.} the action of a matrix $\gamma=\smallmat abcd\in\GL_2(\R)$ with positive determinant on a modular form $f$ is via the formula \[f_{|\ell}\gamma(z)=\det(\gamma)^{\ell/2}(cz+d)^{-\ell}f(\gamma(z)).\]
In particular, if  
$\xi$ is an anticyclotomic unitary Hecke character of conductor $p^n\mathcal{O}_K$, and $p=\p\bar\p$ is split in $ K  $, then
\[\frac{\mathfrak{g}(\xi)}{i^{2m-1}\N(\mathfrak{f})^{1/2}}=\pm i^{2m-1}.\] 

\subsection{Twisted $L$-functions} 
Let $\chi:\mathbb A^\times_K/K^\times\rightarrow \C^\times$ be an algebraic Hecke character. Let $V_{\chi,\ell}$ denote the one-dimensional $E_{\chi,\ell}$-vector space affording the 
character $\sigma_{\chi,\ell}$; recall that $E_{\chi,\ell}$ is a finite extension of $\Q_\ell$ (\cite[Ch. 0, \S5]{Shappacher}).  
Consider the induced representation 
\[\Ind_{\chi,\ell}=\Ind_{G_K}^{G_\Q}(\sigma_{\chi,\ell})
:=V_{\chi,\ell}\otimes_{\Z[G_K]}\Z[G_\Q].\]
Since $[K:\Q]=2$, this is a 2-dimensional $E_{\chi,\ell}$-vector space 
which can be explicitly described as a direct sum $M_1\oplus M_2$ with 
$M_1$ and $M_2$ both isomorphic to $V_{\chi,\ell}$ as $G_K$-modules, 
and such that the non-trivial element $\tau$ of $\Gal(K/\Q)$ permutes the two components 
$M_1$ and $M_2$ in the sense that $\tau(x,0)=(0,x)$ for all $x\in M_1$.
See \cite[Ch. III, \S 5]{Br}.

Let $F=\Q(a_n:n\geq 1)$ be the Hecke field of $f$, and $\mathcal{O}_F$ its ring of integers.
For each prime ideal $\lambda$ of $\mathcal{O}_F$ lying above a rational prime $\ell$, 
consider the $\lambda$-adic modular Galois representation 
\[\rho_{f,\lambda}:G_\Q \longrightarrow\GL_2(F_{\lambda})\]
attached to $f$
 where $F_\lambda$ is the completion of $F$ at $\lambda$. 
Then $\lbrace \rho_{f,\lambda} \rbrace_{\lambda}$ forms a compatible system of Galois representations.

Let $L=L(\rho_{f,\lambda},\sigma_{\chi,\ell})$ be a finite extension of $\Q_\ell$ containing $F_\lambda$ and $E_{\chi,\ell}$. By extension of scalars, we can view  
$\rho_{f,\lambda}$ (respectively, $\Ind_{\chi,\ell}$) as a representation of $G_{\Q}$ on the two-dimensional $L$-vector spaces $W_f=V_{f,\lambda}\otimes_{F_\lambda}L$, where $V_{f,\lambda}$ is the representation space of $\rho_{f,\lambda}$  
(respectively, the two-dimensional $L$-vector space $W_\chi=V_{\Ind_{\chi,\ell}}\otimes_{E_{\chi,\ell}}L$, where $V_{\Ind_{\chi,\ell}}$ is the representation space of $\Ind_{\chi,\ell}$). 
We define  the tensor product representation $\rho_{f,\lambda}\otimes \Ind_{\chi,\ell}$ of $G_ \Q$ as follows: put 
$W_{f\otimes\chi}=W_f\otimes_L W_\chi$, which is a 4-dimensional $L$-vector space.
Define the representation
\[\rho_{f,\lambda}\otimes\Ind_{\chi,\ell}  : G_ \Q  \to \Aut_{L}(W_{f\otimes\chi})\] 
by
\[(\rho_{f,\lambda}\otimes\Ind_{\chi,\ell})(g):=\rho_{f,\lambda}(g)\otimes\Ind_{\chi,\ell}(g)\] for any $g\in G_ \Q$.

Define the $L$-function of $f\otimes\chi$, 
following \emph{e.g} 
\cite[Lecture 3, \S3]{rohrlich-root-numbers}, so follows: Let $W_{f\otimes\chi}$ be as above, for fixed $\lambda\mid \ell$, and 
for each prime number $q\neq\ell$, one may define the local Euler factors 
\[P_q(X)=\det\left(1-X \cdot \mathrm{frob}_q|(W_{f\otimes\chi})^{I_q} \right)\] 
where $I_q$ is the inertia subgroup at $q$ and $\frob_q$ 
is the geometric Frobenius at $q$.
Then $P_q(X)$ is independent of the choice of $\lambda\mid \ell$, and (by varying $\ell$) we may 
define 
\[L_q(f, \chi,s)=P_q(q^{-s})^{-1}\] 
for each prime $q$, and set 
\[L(f, \chi, s)=\prod_{q}L_q(f,\chi,s)\]
where the product is over all rational primes $q$.

\subsection{Explicit Euler factors} \label{Euler factors}
We derive explicit Euler factors in the case when $\chi$ has infinity type $(m,0)$ and corresponds to a theta series $g_\chi$. 
In terms of Euler factors of $L(f,s)$ and $L(g_\chi,s)$, 
one may express the above $L$-series as follows. First, write the Euler factors 
of $f$ and $g_\chi$ as 
\[1-a_q(f) q^{-s}+ q^{k-1-2s}=(1-\alpha_q(f) q^{-s})(1-\beta_q(f) q ^{-s})\] 
and
\[1-a_q(g_\chi) q^{-s}+\psi_{g_\chi}(q)q^{m-2s}=(1-\alpha_q(g_\chi) q ^{-s})(1-\beta_q(g_\chi) q ^{-s})\]
where recall that the weight of $g_\chi$ is $m+1$. The $L$-functions are 
\[L(f,s)=\prod_{q}(1-\alpha_q(f) q ^{-s})^{-1}(1-\beta_q(f) q ^{-s})^{-1}\]
and
\[L(g_\chi,s)=\prod_{q}(1-\alpha_q(g_\chi)  q ^{-s})^{-1}(1-\beta_q(g_\chi) q ^{-s})^{-1}.\] 

The complex function
\[D(f,g_\chi,s):=\sum_{n\geq 1}\frac{a_n(f) \cdot a_n(g_\chi)}{n^s}\] 
converges absolutely for $\Re(s)$ sufficiently large, and extends to an entire function. 
If we put 
\begin{align*}
D_q (f,g_\chi,s) = \ & (1-\alpha_q(f)\beta_q(f)\alpha_q(g_\chi)\beta_q(g_\chi) q^{-2s}) \\
& \cdot (1-\alpha_q(f) \alpha_q(g_\chi) q ^{-s})^{-1} \cdot (1-\beta_q(f) \alpha_q(g_\chi) q ^{-s})^{-1} \\
& \cdot (1-\alpha_q(f) \beta_q(g_\chi) q ^{-s})^{-1} \cdot (1- \beta_q(f) \beta_q(g_\chi) q ^{-s})^{-1} ,
\end{align*}
then $D(f,g_\chi,s)$ admits the Euler product  
\[D(f,g_\chi,s)=\prod_{q }D_q (f,g_\chi,s).\]
See \cite[Lemma 15.9.4]{Jacquet} or \cite[\S 3]{Shi-comm} and \cite[\S 3]{Shi}. 
We also observe that
\begin{align*}
1-\alpha_q(f)\beta_q(f)\alpha_q(g_\chi)\beta_q(g_\chi) q^{-2s} & = 1-\psi_{g_\chi}(q)q^{k-1+(m+1)-1-2s} \\
& =L_q(\psi_{g_\chi},2s+2-(k+m+1))^{-1}
\end{align*}
where the weight of $f$ is $k$ and the weight of $g_\chi$ is $m+1$. 
By \cite[Sec. 15]{Jacquet}, we have  
\begin{align*}
L_q(\rho_{f,\lambda}\otimes\sigma_{\chi,\ell},s) = \ & (1-\alpha_q(f) \alpha_q(g_\chi) q ^{-s})^{-1} \cdot (1-\beta_q(f) \alpha_q(g_\chi) q ^{-s})^{-1} \\
& \cdot (1-\alpha_q(f) \beta_q(g_\chi)  q ^{-s})^{-1} \cdot (1-\beta_q(f) \beta_q(g_\chi) q ^{-s})^{-1}.
\end{align*}
Therefore, we have 
\[D_q(f,g_\chi,s)=L_q(\psi_{g_\chi},2s+2-(k+m+1)) \cdot L_p(f, \chi,s),\]
hence 
\[D(f,g_\chi,s)=L(\psi_{g_\chi},2s+2-(k+m+1))\cdot L(f, \chi,s).\]

\begin{remark}We have 
\[L_q (f, \chi , s)=\prod_{v\mid q }L_v(f, \chi,s) \] 
with 
\[L_v(f, \chi,s):=(1-\chi(v) \cdot \alpha_{\N(v)}(f)  \cdot \N(v)^{-s})^{-1}\cdot (1-\chi(v)  \cdot \beta_{\N(v)}(f)  \cdot \N(v)^{-s})^{-1}
\]
where $v$ is a finite place of $K$ dividing $q$, $\N(v)=\N(\p_v)$, 
$\chi(v) = \hat\chi_\ell(\varpi_v)$ and $\alpha_{p^j}(f):=( \alpha_p(f) )^j$, $\beta_{p^j}(f):=(\beta_p(f))^j$.
\end{remark}

\subsection{Partial $L$-functions}
If $S$ is a finite number of primes, one may define 
\[L^S(f, \chi,s):=\prod_{q\not\in S} L_q(f, \chi,s).\]
Obviously, one has (in the domain of absolute convergence) 
\[L(f, \chi,s)=L^S(f, \chi, s)\cdot\left(\prod_{q\in S}L_q(f, \chi,s) \right).
\] Since each of the three factors can be extended to an holomorphic function, the principle of 
identity of analytic functions tells us that the above equality also holds on all the domain $\C$ 
of convergence. We will often write 
\[L_K^S(f,\chi,s)=L^S(f,\chi,s)\]
to emphasise on the LHS the role of $K$, which is implicit (incorporated in $\chi$) in the notation on the RHS.

\section{Skinner--Urban $p$-adic $L$-functions} \label{sec:skinner-urban}

\subsection{Hida families} \label{subsec:skinner-urban-hida-families}
Let 
$\mathbf{f}=\sum_{n\geq 1}\mathbf{a}(n)q^n \in\mathbb I\pwseries{q}$ 
be the primitive branch of the Hida family passing through the modular form $g$ fixed as in \S\ref{subsec:main-result} and satisfying Assumption \ref{ass}. 
In particular, recall that the character of $\mathbf{f}$ is trivial and $\mathbb{I}$ is a local reduced finite integral extension of $\Lambda_W=\Z_p\pwseries{W}$, where $W$ is an indeterminate, \emph{cf.} \cite[\S3.3.9]{SU} (note that in the notation of \emph{loc. cit.} we take $\chi$ to be the trivial character).    

We fix the notation and terminology from \cite{SU}. 
Let $\Gamma_ K \simeq\Z_p^2$ be the maximal $\Z_p$-extension of $ K $, and denote by $\Gamma_ K ^+\simeq\Z_p$ the cyclotomic $\Z_p$-extension of $ K $ and $\Gamma_ K ^-\simeq\Z_p$ the anticyclotomic $\Z_p$-extension of $ K $. Define the Iwasawa algebras $\Lambda^\pm_ K =\Z_p[\![\Gamma_ K ^\pm]\!]$ and 
$\Lambda_ K =\Z_p[\![\Gamma_ K ]\!]$. Fix an isomorphism $\Z_p[\![\Gamma_ K ^\pm]\!]\simeq\Z_p[\![W]\!]$ sending a topological generator $\gamma_\pm$ of $\Gamma_ K ^\pm$ to $W+1$.

An \emph{arithmetic character} 
$\phi:\mathbb{I}\rightarrow\bar\Q_p$ is a continuous $\Z_p$-algebra 
map whose restriction to 
$\Lambda_W$ satisfies $\phi(1+W)=\zeta(1+p)^{k_\phi-2}$, where $\zeta$ is a primitive $p^{t_\phi-1}$-root of unity, for some integer $t_\phi\geq 1$, and $k_\phi$ is an integer; 
we call $t_\phi$ the \emph{level} of $\phi$ and $k_\phi$ the \emph{weight} of $\phi$ (see \cite[\S3.3.8]{SU}). Any arithmetic $\phi:\mathbb{I}\rightarrow\bar\Q_p$ corresponds to an eigenform
\begin{equation}\label{f_phi}\mathbf{f}_\phi\in S_{k_\phi}(\Gamma_0(N)\cap\Gamma_1(p^{t_\phi}),\chi_\phi)\end{equation}
of weight $k_\phi$ and character $\chi_\phi$ defined as follows. Write
$\chi_\zeta:\Z_p[\![W]\!]\rightarrow \bar\Q_p$ for the arithmetic charachter defined by $1+W\mapsto\zeta$.
Then 
$\chi_\phi=\chi_{\zeta^{-1}}$, so that $\chi_\phi(\gamma_+)=\zeta^{-1}$. 
We finally view $\phi_\phi$ as a Dirichlet character of $(\Z/p^{t_\phi}\Z)^\times$ as follows. Fix an isomorphism $\Gamma_ K ^+\simeq 1+p\Z_p$. 
Identify $(\Z/p^{t_\phi}\Z)^\times$ with $(\Z/p\Z)^\times\times\left((1+p\Z_p)/(1+p\Z_p)^{t_\phi-1}\right)$ and let $\chi_\phi:(\Z/p^{t_\phi}\Z)^\times\rightarrow\bar\Q_p^\times$ 
to be trivial on $(\Z/p\Z)^\times$ and $\chi_\phi$ on $(1+p\Z_p)/(1+p\Z_p)^{t_\phi-1}$. 

Let $\mathbb{I}_K=\mathbb{I}[\![{\Gamma_K}]\!]$ and $\mathbb{I}_ K^\pm=\mathbb{I}[\![\Gamma_ K^\pm]\!]$.
 An \emph{arithmetic character} of $A$ for $A=\mathbb{I}_K$ or $A=\mathbb{I}_K^\pm$ is a continuous $\bar\Q_p$-valued $\Z_p$-algebra map whose restriction to $\mathbb{I}$ is an arithmetic character and such that 
$\phi(\gamma_+)=\zeta_+(1+p)^{k_{\phi_{|\mathbb{I}}}-2}$ and $\phi(\gamma_-)=\zeta_-$,  
where $\zeta_+$ and $\zeta_-$ are $p$-power roots of unity.
 
Let
$\boldsymbol{\varepsilon}_ K $ be the  
canonical character defined by composition 
\[\boldsymbol{\varepsilon}_ K :\Gal(\bar{ K }/ K )\longrightarrow \Gamma_ K \longmono \Lambda_ K ^\times\]
of the canonical projection and the inclusion of group-like elements;
then $\boldsymbol{\varepsilon}_ K$ factors through $\Gal(K^\mathrm{ab}/K)$, where $ K ^\mathrm{ab}$ is the maximal abelian extension of $ K $.  Define similarly the character 
\[\boldsymbol{\varepsilon}_ K ^+:\Gal( K ^\mathrm{ab}/ K )\longepi \Gamma_ K ^+\longmono(\Lambda_ K ^+)^\times.\] 
Let
\[\mathrm{rec}_ K :\mathbb A_ K ^\times/ K ^\times\longrightarrow 
\Gal( K ^\mathrm{ab}/ K )\] be the geometrically normalized 
reciprocity map of class field theory (so $\mathrm{rec}_ K (\ell)= \mathrm{frob}_\ell$, where
$\mathrm{frob}_\ell$ is the geometric Frobenius element). 
Define the characters
\[\mathit{\Psi}_ K  :\mathbb A_ K ^\times/ K ^\times
\overset{\mathrm{rec}_ K }\longrightarrow \Gal( K ^\mathrm{ab}/ K )
\overset{\boldsymbol{\varepsilon}_ K }
\longrightarrow \Lambda_ K ^\times,\] 
\[\mathit{\Psi}_ K ^+ :\mathbb A_ K ^\times/ K ^\times
\overset{\mathrm{rec}_ K }\longrightarrow \Gal( K ^\mathrm{ab}/ K )\overset{\boldsymbol{\varepsilon}_ K^+}
\longrightarrow (\Lambda_ K ^+)^\times.\] 
Let 
$\xi_\phi=\phi\circ(\Psi_ K /\Psi_K^+)$ be the composition (\cite[top of page 47]{SU}): 
\[\xi_\phi:\mathbb A_ K ^\times/ K ^\times
\overset{\mathrm{rec}_ K }\longrightarrow \Gal( K ^\mathrm{ab}/ K )
\overset{\boldsymbol{\varepsilon}_ K /\boldsymbol{\varepsilon}_ K ^+}
\longrightarrow \Lambda_ K ^\times\overset\phi\longrightarrow \bar\Q_p^\times.\] Note that $\xi_\phi$ is a finite order idele class character. 
Finally, define $\theta_\phi=\chi_\phi^{-1}\cdot\xi_\phi$; we denote $\mathfrak{f}_{\theta_\phi}$ the conductor of $\theta_\phi$. Note that $\theta_\phi$ is a finite order idele class character.

\subsection{Period integrals, canonical periods, and $L$-values of modular forms} \label{sec4.2}
We first review period integrals and the canonical periods following \cite[$\S$3.3.3]{SU}.
Recall that the Eichler--Shimura period map
$$\mathrm{Per} : S_k(N) \longrightarrow \mathrm{H}^1(\Gamma_1(N), \mathrm{Sym}^{k-2}(\mathbb{C}^2))$$
is defined by putting
$\mathrm{Per}(f(z))$ equal to the cohomology class represented by the cocycle 
$$\gamma \longmapsto \int^{\gamma(\tau)}_{\tau} f(z) (z^{k-1}, z^{k-2}, \cdots, 1) dz,$$
where the integration is over any path between $\tau$ and $\gamma(\tau)$.
It is well-known that the period map is Hecke-equivariant.

Let $f \in S_k(\Gamma_1(N))$ be an eigenform, and denote by $\mathbb{Q}(f)$ the field extension of $\mathbb{Q}$ by adjoining the Fourier coefficients of $f$.
Define
$\mathbb{Z}(f)_{(p)} :=\mathbb{Q}(f) \cap \iota^{-1}_p (\overline{\mathbb{Z}}_p)$, where $\overline{\Z}_p$ is the valuation ring of $\overline{\Q}_p$ and $\iota_p:\Q\hookrightarrow\bar{\Q}_p$ is the chosen embedding, 
and
$$M(f)_{(p)} := \mathrm{H}^1\left(\Gamma_1(N), \mathrm{Sym}^{k-2}( \mathbb{Z}(f)^2_{(p)} )\right)[\wp_f]$$
where $\wp_f$ is the height one prime ideal of the full Hecke algebra over $\mathbb{Z}_{(p)}$ faithfully acting on $S_k(N)$ corresponding to $f$.
We now assume that $f$ is a newform.
Then $M(f)_{(p)} $ is free of rank two over $\mathbb{Z}(f)_{(p)}$.
By using $\iota_{\infty} : \mathbb{Z}(f)_{(p)} \hookrightarrow \mathbb{C}$, we regard
$M(f)_{(p)}$ as a submodule of $\mathrm{H}^1(\Gamma_1(N), \mathrm{Sym}^{k-2}(\mathbb{C}^2))$
which spans the two-dimensional complex vector space 
$\mathrm{H}^1(\Gamma_1(N), \mathrm{Sym}^{k-2}(\mathbb{C}^2))[\wp_f]$.
Fix a $\mathbb{Z}(f)_{(p)} $-basis $(\gamma^+, \gamma^-)$ of $M(f)_{(p)}$ such that 
$\iota ( \gamma^{\pm} ) = \pm \gamma^{\pm}$ where $\iota$ is the involution associated with the conjugate action of $\left( \begin{smallmatrix}
1 & 0 \\ 0 & -1
\end{smallmatrix} \right)$ on the cohomology group 
$\mathrm{H}^1(\Gamma_1(N), \mathrm{Sym}^{k-2}(\mathbb{C}^2))$.
We define the canonical periods $\Omega^{\pm}_f \in \mathbb{C}^\times$ of $f$ by the equality
$$\mathrm{Per}(f(z)) = \Omega^+_f \cdot \gamma^+ + \Omega^-_f \gamma^-$$
and these periods are well-defined up to units in $\mathbb{Z}(f)_{(p)}$.
See Vatsal's paper 
\cite{Vatsal-period} for more details.

The connection between period integrals and $L$-values is as follows.
For an eigenform $f \in S_k(N)$, we have identity
$$2 \pi i \cdot \int^{0}_{i \infty}f(z)z^j dz = \dfrac{j!}{(-2\pi i)^j} \cdot L(f, j+1)$$
for $0 \leq j \leq k-2$.

\subsection{Three-variable $p$-adic $L$-functions} 
For the notational compatibility, 
denote by \[L_K^S(f,\xi,s)=L^S(f,\xi,s)\] as in \cite[\S 3.4.2]{SU} the $L$-function $L^S(f,\xi,s)$
where $\xi$ is a finite order Hecke character.

Under the condition that $p$ is split in $K$ and the 
assumptions $(\mathbf{irred})_\mathbf{f}$ 
and $(\mathbf{dist})_\mathbf{f}$ (\cite[\S3.3.10]{SU}), 
Skinner and Urban prove in  \cite[Theorem 12.6]{SU} the existence of 
$\mathcal L^S_{\mathbf{f}, K}\in 
\mathbb I_ K$, where $S$ is a finite set of primes containing all those primes dividing $DNp$, satisfying the following interpolation formula for the complex $L$-function $L_K^S(\mathbf{f}_\phi,\theta_\phi,s)$: for all 
arithmetic characters 
$\phi:\mathbb I_K\rightarrow \bar\Q_p$
we have
\begin{equation}\label{interpolation SU}
\phi(\mathcal L^S_{\mathbf{f},K}) =
\frac{u_{\mathbf{f}_\phi}\cdot ((k_\phi-2)!)^2\cdot\mathfrak{g}(\theta_\phi)\cdot\N(\mathfrak{f}_{\theta_\phi}\mathfrak{d})^{k_\phi-2}}
{a(p,\mathbf{f}_\phi)^{\ord_p\left(\N(\mathfrak{f}_{\theta_\phi})\right)}}
\cdot \frac{L^S_K(\mathbf{f}_\phi,\theta_\phi,k_\phi-1)}{(-2\pi i)^{2k_\phi-2}\cdot\Omega_{\mathbf{f}_\phi}^+\cdot\Omega_{\mathbf{f}_\phi}^-}\end{equation}
where
\begin{itemize}
\item  $u_{\mathbf{f}_\phi}$ is a unit which depends only on 
$\mathbf{f}_\phi$. 
\item $\Omega_{\mathbf{f}_\phi}^\pm$ are the canonical periods of $\mathbf{f}_\phi$, which are defined in \S\ref{sec4.2} up to $p$-adic units. 
\end{itemize}

\subsection{Critical twists} 
Howard in the papers \cite{howard-invmath} and \cite[$\S$2]{howard-annalen} introduces several characters $\Theta$, $\theta$ and 
$\Theta_\p$, $\theta_\p$, $\chi_\p$ for $\mathfrak p\subseteq\mathbb I$ for any 
arithmetic prime (i.e. $\mathfrak p=\ker(\phi_{|\mathbb I})$ for some arithmetic character $\phi:\mathbb{I}_K\rightarrow\bar\Q_p$). We recall the relevant definitions. 
Decompose the cyclotomic character
\[
\chi_{\rm cyc}:G_\Q \longrightarrow \Z_p^\times\simeq\boldsymbol{\mu}_{p-1}\times\Gamma
\]
 (thus, here we use the unadorned symbol $\Gamma$ for $1+p\Z_p$; also, 
in \cite{SU} the cyclotomic character is denoted $\epsilon$, while in 
\cite{howard-invmath} and \cite[$\S$2]{howard-annalen} it is denoted by $\chi_\mathrm{cyc}$) 
into its tame part
$\omega:G_\Q\rightarrow \boldsymbol{\mu}_{p-1}$
and its wild part $\chi_{\rm w}:G_\Q\rightarrow \Gamma$. The choice of
a square-root $\omega^{(k-2)/2}:G_\Q\rightarrow \boldsymbol{\mu}_{p-1}$
of $\omega^{k-2}$, which we fix from now on, determines the choice
of a \emph{critical character}
\[
\Theta :G_\Q\longrightarrow \Z_p\pwseries{\Gamma}^\times \simeq \Lambda_W^\times\subseteq \mathbb I^\times
\]
defined by 
$\Theta=\omega^{(k-2)/2}\cdot[\chi_{\rm w}^{1/2}],$
where $\chi_{\rm w}^{1/2}$ is the unique square-root of $\chi_{\rm w}$
with values in $\Gamma$
and $z\mapsto [z]$ is induced by the inclusion of group-like elements
$\Gamma\hookrightarrow\Z_p\pwseries{\Gamma}$ followed 
by the isomorphism $\Z\pwseries{\Gamma}\simeq \Lambda_{W}=\Z_p\pwseries{W}$
which takes $1+p$ to $1+W$. 
Let 
$
\theta:\Z_p^\times\rightarrow\mathbb I^\times
$
be such that $\Theta=\theta\circ\chi_{\rm cyc}$, and
for each arithmetic prime $\p\subseteq\mathbb I$, let
$
\theta_\p:\Z_p^\times\rightarrow \Q_p^\times
$
be the map induced by the composition of $\theta$ with $\phi_{|\mathbb I}$. 
In the same way, define 
$\Theta_\p:G_\Q\rightarrow \bar\Q_p^\times$
as the composition of $\Theta$ with  $\phi_{|\mathbb I}$. Finally, recall that for all arithmetic $\mathfrak p\subseteq \mathbb I$ of weight 2 we have 
$\chi_{0,\mathfrak p}=\theta_\mathfrak p^{-2}$ where $\chi_{0,\mathfrak p}$ is the restriction of $\chi$ 
to $\mathbb A_\mathbb Q^\times$. 

We introduce the character $\mathbf{\Theta}$
on the Galois group $\Gamma_K$ by the following diagram:  
\[
\xymatrix{
\mathrm{Gal}(K^{\mathrm{ab}}/K)  \ar@{^{(}->}[r] \ar@{->>}[d] & \mathrm{Gal}(K^{\mathrm{ab}}/\mathbb{Q}) \ar@{->>}[r] & \mathrm{Gal}(\mathbb{Q}^{\mathrm{ab}}/\mathbb{Q}) \ar[d]^-{\chi_{\rm cyc}} \ar[ld]_-{\Theta} \\ 
\Gamma_K \ar[r]^-{\mathbf{\Theta}} 
\ar@/_1pc/[rr]
& \mathbb{I}^\times & \mathbb{Z}^\times_p \ar[l]_-{\theta} & .
}
\]
Define the twist operator 
\[\mathrm{Tw}_{\mathbf{\Theta}^{-1}}:
\mathbb I\pwseries{\Gamma_K }\longrightarrow \mathbb I\pwseries{\Gamma_K }\] 
as the unique $\mathbb{I}$-algebra homomorphism characterised by sending $\gamma\in\Gamma_K$ to 
\[\mathrm{Tw}_{\mathbf{\Theta}^{-1}}(\gamma):= {\mathbf{\Theta}^{-1}}(\gamma)\cdot\gamma.\]

\begin{definition}\label{def su} Define the \emph{critical twist} of the Skinner--Urban three-variable $p$-adic $L$-function to be 
$\mathcal L^{S,\dagger}_{\mathbf{f},K }:=
\mathrm{Tw}_{\mathbf{\Theta}^{-1}}(\mathcal L^S_{\mathbf{f},K }).$
\end{definition}

\subsection{Anticyclotomic specialisations} 
The map $\Gamma_K\rightarrow \Gamma_K^-$ defined by $\gamma_+\mapsto1$ and $\gamma_-\mapsto\gamma_-$ induces a surjective map 
$\pi_\mathbb{I}^\mathrm{ac}:\mathbb{I}\pwseries{\Gamma_K}\twoheadrightarrow\mathbb{I}\pwseries{\Gamma_K^-}$.  

\begin{definition} The \emph{two-variable anticyclotomic specialisation} of the critical twist Skinner-Urban three-variable $p$-adic $L$-function is 
$L^\mathrm{SU}_{S,\mathbb{I}}=\pi_\mathbb{I}^\mathrm{ac}(\mathcal L^{S,\dagger}_{\mathbf{f},K })$. 
\end{definition}

Write $\mathbf{f}_{\phi}=f$ and consider the morphism
$\phi_{|\mathbb{I}}:\mathbb{I}\rightarrow\mathcal{O}$, where $\mathcal{O}$ is the valuation ring of a finite extension of $\Q_p$, containing the ring $\Z_p[a(n,f)]$.
Similarly as above, the map $\Gamma_K\rightarrow \Gamma_K^-$ defined by $\gamma_+\mapsto1$ and $\gamma_-\mapsto\gamma_-$ induces a surjective map
$\pi_\mathcal{O}^\mathrm{ac}:\mathcal{O}\pwseries{\Gamma_K}\twoheadrightarrow\mathcal{O}\pwseries{\Gamma_K^-}$.
 Define the morphism 
 $(\phi_{|\mathbb{I}}\otimes\mathrm{id}):\mathbb{I}\otimes_{\Z_p}\Z_p[\![\Gamma_K ]\!]\rightarrow \mathcal{O}[\![\Gamma_K ]\!]$ by $(\phi_{|\mathbb{I}}\otimes\mathrm{id})(x\otimes a)=\phi_{|\mathbb{I}}(x)\otimes a$; composing with the canonical isomorphism $\mathbb{I}[\![\Gamma_K ]\!]\simeq\mathbb{I}\otimes_{\Z_p}\Z_p[\![\Gamma_K ]\!]$ we thus obtain a morphism 
\[\pi_f^\mathrm{ac}:\mathbb{I}[\![\Gamma_K ]\!]\overset{\phi_{|\mathbb{I}}\otimes\mathrm{id}}\longrightarrow \mathcal{O}[\![\Gamma_K ]\!]
\overset{\pi_\mathcal{O}^\mathrm{ac}}\longrightarrow\mathcal{O}\pwseries{\Gamma_K^-}.\]

\begin{definition} The \emph{one-variable anticyclotomic specialisation} of the critical twist Skinner-Urban three-variable $p$-adic $L$-function is 
$L^\mathrm{SU}_{S,f}=\pi_f^\mathrm{ac}(\mathcal L^{S,\dagger}_{\mathbf{f},K })$. 
\end{definition}

\begin{remark} See also \cite[\S 3.4.6]{SU} for similar definitions; we also note that 
$\phi_{|\mathbb{I}}(L^\mathrm{SU}_{S,\mathbb{I}})=L^\mathrm{SU}_{S,f}$. 
\end{remark}

\section{Chida--Hsieh $p$-adic $L$-function} \label{CH}

Let 
$f=\sum_{n\geq 1}a_n(f)q^n$ be a modular 
form of level $\Gamma_0(Np)$ and even weight $k$, with trivial character satisfying 
Assumption \ref{ass}; since Assumption \ref{ass} is formally only stated for our fixed form $g$, 
we make the assumptions on $f$ (which could be different from $g$, even if later on will appear in the applications as arithmetic specialisation of the Hida family $\mathbf{f}$) more precise. We assume that
\begin{itemize}
\item $f\in S_k(\Gamma_0(Np))$ has level $\Gamma_0(Np)$, even weight $k\geq 2$ and trivial character; 
\item $p\nmid N$;  
\item $f$ is ordinary at $p$;
\item $f$ is a $p$-stabilised newform and we write 
$f_0$ for the unique newform of level $\Gamma_0(N)$ whose ordinary $p$-stabilisation is $f$ if $f$ is old at $p$, and $f_0=f$ if $f$ is a newform of level $\Gamma_0(Np)$. 
\end{itemize}
We keep the convention in $\S$\ref{subsec:main-result}, so in particular recall that $K$ is a quadratic imaginary extension of discriminant $D$ prime to $Np$, $p$ is split in $K$, the factorisation $N=N^+N^-$ is defined so that a prime number $\ell$ divides $N^+$ (respectively, $N^-$) if and only if it is split (respectively, inert) in $K$, and $N^-$ is a square-free product of an odd number of distinct primes. 

In \cite[Theorem 4.6]{ChHs1}, Chida--Hsieh construct an element 
$\Theta_p=\Theta_p(f)$ in the Iwasawa algebra  
$\mathcal{O}_L\pwseries{\Gamma_K^-}$ satisfying 
an interpolation property which we now describe.

\subsection{Setting the stage}
Let $\chi:\mathbb{A}_K^\times/K^\times\rightarrow \mathbb{C}^\times$ be an anticyclotomic algebraic Hecke character of infinity type $(m,-m)$ with \[-(k/2-1)\leq m\leq (k/2-1).\] 

Write $p = \mathfrak{p}  \cdot \overline{ \mathfrak{p}}$ in $K$ and assume that $\mathfrak{p}$ is compatible with  the chosen embedding $\iota_p$. 
Recall that $\mathbb{A}_{K,f}^\times$ is the subgroup of finite ideles of $\mathbb{A}_K^\times$.
By using the fixed embeddings $\iota_\infty$ and $\iota_p$, recall that the \emph{$p$-adic avatar} $\hat\chi:{\mathbb{A}_{K,f}^\times}/K^\times\rightarrow\bar\Q_p^\times$ of $\chi$ is defined by the locally algebraic character 
 \[\hat{\chi}(a)=\iota_p \iota^{-1}_\infty (\chi(\iota_{\mathrm{fin}}(a)))\cdot {( a_{\mathfrak{p}}  / a_{\overline{ \mathfrak{p}}} )^{m}},\]
where $\iota_{\mathrm{fin}}:\mathbb{A}_{K,f}^\times\hookrightarrow \mathbb{A}_K^\times$ is the map sending $x\mapsto (x,1)\in\mathbb{A}_{K,f}^\times\times \C^\times$.

Assume that the conductor of $\chi$ is $p^n$. Using the reciprocity map, $\hat\chi$ gives rise to a Galois character, denoted by the same symbol,  $\hat\chi:\Gamma_K^-\rightarrow\bar\Q_p^\times$ (as before, $\Gamma_K^-$ is the anticyclotomic $\Z_p$-extension of $K$). 
Let $\alpha_p=\alpha_p(f_0)$ be the unit root of the Hecke polynomial of $f_0$ at $p$ if $f$ is old at $p$, and $\alpha_p = a_p(f)$ if $f$ is new at $p$. 
Define 
\[{e_p(f,\chi)=\begin{cases} 1,\text{ if $n\geq 1$} \\ \left(1-\dfrac{\chi(\p)\cdot p^\frac{k-2}{2}}{\alpha_p}\right)\cdot\left(1-\dfrac{\bar\chi(\p)\cdot p^\frac{k-2}{2}}{\alpha_p}\right), \text{ if $n=0$.}
\end{cases}}\]
Write $u_K=\#\mathcal{O}_K^\times/2$. 

\subsection{Quaternionic modular forms and Gross periods}\label{gross-period}
Write $N^+=\mathfrak{N}^+\cdot\bar{\mathfrak{N}}^+$ as a factorisation of coprime ideals in $\mathcal{O}_K$. 

Let $B$ be the definite quaternion algebra over $\mathbb{Q}$ of discriminant $N^-$ and $R$ an Eichler order of level $N^+$.
Let $\phi_{f_0}$ be an integrally normalized Jacquet--Langlands transfer of a newform $f_0 \in S_k(\Gamma_0(N))$, i.e. a non-constant continuous function
$$\phi_{f_0} : B^{\times} \backslash \widehat{B}^{\times} / \widehat{R}^{(p), \times} \to \mathrm{Sym}^{k-2}(\mathcal{O}^2_L)$$
such that 
$\phi_{f_0} (a \cdot g \cdot r) =  r^{-1} \circ \phi_{f_0} (g)$ for $a \in B^{\times}$ and $r \in R^\times_p$, and
 the Hecke eigenvalues of $f_0$ and $\phi_{f_0}$ are the same at all primes not dividing $N^-$.
The integral normalization of $\phi_{f_0}$ is determined by the mod $p$ non-vanishing of the values of $\phi_{f_0}$ at the representatives of finite set $B^{\times} \backslash \widehat{B}^{\times} / \widehat{R}^{\times}\widehat{\mathbb{Q}}^\times$ in $\widehat{B}^{\times}$.  Here, we used fixed embeddings $\iota_p$ and $\iota_\infty$.
The space of such functions is denoted by $S^{N^-}_k(N^+, \mathcal{O}_L)$.
Recall that there is a pairing 
\[\langle - , - \rangle_k : \mathrm{Sym}^{k-2} (\mathcal{O}) \times \mathrm{Sym}^{k-2} (\mathcal{O}) \longrightarrow ((k-2)!)^{-1}\mathcal{O}\] defined in \cite[$\S$2.3]{ChHs1}, and define the pairing 
\begin{equation} \label{eqn:quaternionic_pairing}
\langle -, - \rangle_{N^+} : S^{N^-}_k(N^+, \mathcal{O}_L) \times S^{N^-}_k(N^+, \mathcal{O}_L) \to ((k-2)!)^{-1}\mathcal{O}_L
\end{equation}
as in \cite[(6.1)]{ChHs1} by the formula, for $\psi_1, \psi_2\in  S^{N^-}_k(N^+, \mathcal{O}_L)$, 
$$\langle\psi_1, \psi_2 \rangle_{N^+ } := \sum_{[b]} 
\dfrac{1}{  \# \left((B^\times \cap b\widehat{R}^\times b^{-1}\widehat{\mathbb{Q}}^\times ) / \mathbb{Q}^\times\right)  } \cdot
\langle \psi_1(b), \psi_2(bw_{N^+}) \rangle_k $$
where
$w_{N^+}$ is the Atkin--Lehner operator for level $N^+$ (\emph{cf.} \cite[\S3.3]{ChHs1}),
$[b]$ runs over a set of representatives of $B^\times \backslash \widehat{B}^\times / \widehat{R}^\times\widehat{\mathbb{Q}}^\times$. 

Let $\xi_{\phi_{f_0}}(N^+, N^-) = \langle \phi_{f_0}, \phi_{f_0} \rangle_{N^+}$ be the quaternionic analogue of the cohomology congruence ideal for $\phi_{f_0}$ using the above pairing (\ref{eqn:quaternionic_pairing}) as in \cite[$\S$2.1]{pollack-weston} and \cite[(3.9) and (4.3)]{ChHs1}.
Define the \emph{Gross period} by
\[\Omega_{f_0,N^-}=\dfrac{(4\pi)^k\langle f_0,f_0\rangle_{N}}{ \xi_{\phi_{f_0}}(N^+, N^-) }\]
where 
\[\langle f_0,f_0\rangle_{N} = \int \int_{\Gamma_0(N) \backslash \mathfrak{h}} f_0(z) \cdot \overline{f_0(z)}y^{k-2} dxdy\] is the Petersson norm of $f_0$; 
see \cite[(4.3), Remark (ii) and \S6]{ChHs1} for details. 

\begin{remark} \label{rem:p-stabilization}
We can also repeat the process above for a $p$-stabilized newform $f$.
When $f$ is the $p$-stabilization of $f_0$, it is not difficult to see that $\Omega_{f,N^-} = \Omega_{f_0,N^-}$ up to a $p$-adic unit with help of Ihara's lemma for quaternion algebras \cite[Theorem 5.13]{kim-asian} under Assumption \ref{ass}.(1). See \cite[Lemma 3.6]{pw-mt} for the details.
\end{remark}
\subsection{The interpolation formula}
When $f$ is the $p$-stabilization of a newform $f_0$ of level $\Gamma_0(N)$, let $\epsilon_p(f)=1$.
When $f$ is new at $p$, let $\epsilon_p(f)\in\{\pm 1\}$ is the eigenvalue of the Atkin--Lehner involution at $p$ acting on $f$; in particular, $\epsilon_p(f)=-{p^{-\frac{k-2}{2}}}\cdot{a_p(f)}$. 
Define \[u(f,p)=u_K^2\cdot\sqrt{D}\cdot\chi(\mathfrak{N}^+)\cdot D^{k-2}
\cdot\epsilon_p(f)\cdot (-1)^m.\] 
Finally, let $\Gamma(s)$ be the complex $\Gamma$-function; recall that 
$\Gamma(j)=(j-1)!$ if $j\geq 1$ is an integer.  
Then we have
\begin{equation} 
\label{eq-Ch-Hs}
\hat\chi(\Theta_p^2)=e_p(f,\chi)^{2-t}\cdot u(f,p) \cdot
\Gamma\left(\frac{k}{2}+m\right)\cdot\Gamma\left(\frac{k}{2}-m\right)\cdot 
\frac{p^{n(k-1)}}{\alpha_p^{2n}}\cdot
\frac{L_K(f_0,\chi,k/2)}{\Omega_{f,N^-}},\end{equation}
where recall that $f_0 = f$ when $f$ is new at $p$, in which case we put $t=1$,  and $f$ is the ordinary $p$-stabilisation of 
$f_0$ when $f$ is old at $p$, in which case we put $t=0$. 

\begin{definition}\label{def-Ch-Hs}
Define the \emph{Chida--Hsieh} $p$-adic $L$-function of $f$ 
to be $L^\mathrm{CH}_f=\Theta_p^2(f)$ in $\mathcal{O}\pwseries{\Gamma_K^-}$.   
\end{definition}

\begin{remark}
In \cite[\S4]{ChHs1} several theta elements $\Theta_n^{[m]}$ for $- k/2<m<k/2$ and $n\geq 1$ an integer are considered. A priori,
 $\Theta_n^{[m]}(f)$ belongs to $((k-2)!)^{-1}\mathcal{O}\pwseries{\mathscr{G}_n}$ for $\mathscr{G}_n\simeq \Z/p^n\Z$ the quotient of $\Gamma_K^-$ of order $p^n$; however, if $m=k/2-1$, then $\Theta^{[k/2-1]}(f)$ is integral (see also \cite[Remark 2.5]{cas-longo}), and from  \cite[Corollary 4.5]{ChHs1} it follows that all of these elements $\Theta_n^{[m]}$ are integral, so in fact $L^\mathrm{CH}_f$ belongs to $\mathcal{O}\pwseries{\Gamma_K^-}$. 
\end{remark}

\begin{remark}
If $k=2$, the \emph{square-root} $p$-adic $L$-function $\Theta_p$ has a long story and has been studied, among others, by Hida \cite{Hida1}, \cite{Hida2}, Perrin-Riou \cite{PR}, Vatsal \cite{Vat1}, and Bertolini--Darmon \cite{BD-Heegner}, \cite{BDmumford-tate}, \cite{bdIMC} (with slightly different interpolation formulas); however, we decided the name of $L^\mathrm{CH}_f$ because we mainly follow the presentation of this $p$-adic analytic function settled by Chida--Hsieh in \cite{ChHs1}. 
\end{remark}

\section{Periods and congruence ideals} 
In this section we study the relation between periods and congruence ideals of modular forms. In this section $N$ is an integer, $p\nmid N$ a prime number, 
$K$ a quadratic imaginary field of discriminant prime to $Np$, $N=N^+N^-$ the factorisation of $N$ where a prime number $\ell\mid N^+$ if and only if it is split in $K$. We assume as before that $N^-$ is a square-free product of an odd number of primes. 

Let $S_k(\Gamma_0(N), \mathcal{O}_L)$ be the $\mathcal{O}_L$-module of weight $k$ modular forms on $\Gamma_0(N)$ with coefficients in $\mathcal{O}_L$ with respect to the chosen embedding $\iota_p$. 
Let $\mathfrak{m}$ be a non-Eisenstein maximal ideal of the full Hecke algebra over $\mathcal{O}_L$ acting faithfully on $S_k(\Gamma_0(N), \mathcal{O}_L)$.
Let $\mathbb{T}_{N}$ denote the localization of the full Hecke algebra at $\mathfrak{m}$.
In other words, 
$\mathbb{T}_{N}$ is the $\mathcal{O}_L$-subalgebra of $\End_{\mathcal{O}_L}(S_k(\Gamma_0(N),\mathcal{O}_L)_{\mathfrak{m}} )$  generated by Hecke operators $T_\ell$ for primes $\ell\nmid N$ and $U_\ell$ for primes $\ell\mid N$.
We also denote $\mathbb{T}_{N}^{\new}$ the quotient of $\mathbb{T}_{N}$ acting faithfully on the submodule of $S_k(\Gamma_0(N),\mathcal{O}_L)_{\mathfrak{m}}$ consisting of forms which are new at all primes dividing $N^-$.
 
Fix an eigenform $f_0 \in S_k(\Gamma_0(N),\mathcal{O}_L)_{\mathfrak{m}}$ which is new at all primes dividing $N^-$. 
Let \[\theta_{f_0}:\mathbb{T}_{N} \longrightarrow \mathcal{O}_L\] be the morphism associated with $f_0$.  
Let $\eta_{f_0}$ be a generator of the $\mathcal{O}_L$-ideal 
\[\theta_{f_0}\left(\mathrm{Ann}_{\mathbb{T}_{N}}(\ker(\theta_{f_0}))\right).\] The ideal $(\eta_{f_0})$ is called the \emph{congruence ideal} of $f_0$.
 Since $f_0$ is new at all primes dividing $N^-$, $\theta_{f_0}$ factors through the canonical projection $\mathbb{T}_{N}\twoheadrightarrow\mathbb{T}_{N}^\new$, and we obtain  a morphism 
$\theta^{\new}_{f_0}:\mathbb{T}_{Np}^{\new}\rightarrow\mathcal{O}_L$. Denote by
$\eta_{f_0,N^-}$ a generator of the $\mathcal{O}_L$-ideal
\[\theta_{f_0}^\new(\mathrm{Ann}_{\mathbb{T}_{N}^\new}(\ker(\theta_{f_0}^\new))).\]
See 
\cite[\S2.2]{pollack-weston} (when $k=2$) and \cite[\S6]{ChHs1} (for $k\geq 2$) 
for details. Recall the canonical periods $\Omega_f^+$, $\Omega_f^-$ defined in \S\ref{sec4.2} (these are well defined up to $p$-adic units) and the Gross period $\Omega_{f_0,N^-}$ introduced in \S\ref{gross-period} (under Assumption \ref{ass}). 

\begin{proposition}\label{congruence} 
Under Assumption $\ref{ass}$, up to $p$-adic units, we have 
\begin{enumerate}
\item $\displaystyle{\Omega_{f_0,N^-}=\frac{ (4\pi)^k \cdot \langle f_0,f_0\rangle_{N}}{\eta_{f_0,N^-}}}$.
\item $\displaystyle{\Omega_{f_0}^+\cdot\Omega_{f_0}^-=\frac{ \langle f_0,f_0\rangle_{N}}{\eta_{f_0}}}$.
\end{enumerate}
\end{proposition}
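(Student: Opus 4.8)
The two identities are essentially the defining relations between Gross periods (resp. canonical periods) and the relevant congruence numbers, so the substance of the argument is showing that the two \emph{a priori} different congruence-type invariants — the ``cohomology congruence ideal'' $\xi_{\phi_{f_0}}(N^+,N^-) = \langle \phi_{f_0}, \phi_{f_0}\rangle_{N^+}$ appearing in the definition of $\Omega_{f_0,N^-}$, and the Hecke-algebraic congruence ideals $(\eta_{f_0})$, $(\eta_{f_0,N^-})$ defined via annihilators in $\mathbb{T}_N$ and $\mathbb{T}_N^{\new}$ — agree up to $p$-adic units. For part (1), I would argue that $\xi_{\phi_{f_0}}(N^+,N^-)$ generates the same $\mathcal{O}_L$-ideal as $\eta_{f_0,N^-}$; this is precisely the content of the quaternionic congruence-number computation in \cite[\S3.9, \S4.3, \S6]{ChHs1} combined with \cite[\S2.1--\S2.2]{pollack-weston}, where the key input is that $\phi_{f_0}$ is \emph{integrally normalised} (its values are mod-$p$ nonzero on the relevant double-coset representatives), so the self-pairing $\langle\phi_{f_0},\phi_{f_0}\rangle_{N^+}$ measures exactly the congruence module of $\theta_{f_0}^{\new}: \mathbb{T}_N^{\new}\to\mathcal{O}_L$. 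Substituting $\xi_{\phi_{f_0}}(N^+,N^-) = \eta_{f_0,N^-}$ (up to units) into the definition $\Omega_{f_0,N^-} = (4\pi)^k\langle f_0,f_0\rangle_N / \xi_{\phi_{f_0}}(N^+,N^-)$ gives (1) at once.

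For part (2), the relevant fact is the classical comparison between the Petersson norm $\langle f_0,f_0\rangle_N$ and the product of canonical periods $\Omega_{f_0}^+\Omega_{f_0}^-$: after the Eichler--Shimura isomorphism $\mathrm{Per}$ identifies $S_k(\Gamma_1(N))$ with a piece of $\mathrm{H}^1(\Gamma_1(N),\mathrm{Sym}^{k-2})$, the Poincaré/cup-product pairing on cohomology restricted to the $f_0$-eigenspace relates $\langle f_0,f_0\rangle_N$ (which computes the Petersson pairing) to $\langle\gamma^+,\gamma^-\rangle \cdot \Omega_{f_0}^+\Omega_{f_0}^-$, and the pairing of the integral basis vectors $\langle\gamma^+,\gamma^-\rangle$ generates exactly the congruence ideal $(\eta_{f_0})$ by the standard Hida/Pollack--Weston argument identifying the discriminant of the cup-product on $M(f_0)_{(p)}$ with the congruence number of $\theta_{f_0}$ (see \cite[\S2.2]{pollack-weston} for $k=2$ and \cite[\S6]{ChHs1} for general $k$). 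Rearranging yields $\Omega_{f_0}^+\Omega_{f_0}^- = \langle f_0,f_0\rangle_N / \eta_{f_0}$ up to $p$-adic units. The irreducibility and ramification hypotheses in Assumption \ref{ass} (items (8), (9), (10)) are what guarantee the maximal ideal $\mathfrak{m}$ is non-Eisenstein and that multiplicity-one / freeness statements for $M(f_0)_{(p)}$ and for the quaternionic module hold, so that ``congruence ideal'' is well-defined and these comparisons are clean.

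The main obstacle is the careful bookkeeping in part (1): one must check that the normalisations built into Chida--Hsieh's $\phi_{f_0}$ and into the pairing $\langle-,-\rangle_{N^+}$ (the factors $1/\#((B^\times\cap b\widehat{R}^\times b^{-1}\widehat{\mathbb{Q}}^\times)/\mathbb{Q}^\times)$, the normalisation of $\langle-,-\rangle_k$ with its $((k-2)!)^{-1}$, and the passage through Jacquet--Langlands) really do combine to the \emph{same} $\mathcal{O}_L$-ideal as the Hecke-theoretic $\eta_{f_0,N^-}$, with no spurious powers of $(k-2)!$ or of primes dividing $N$ surviving — here $p\nmid 6N$ from Assumption \ref{ass}.(2) is used to absorb such factors into units. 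Once this identification is in place (which is genuinely \cite[Proposition 6.8]{ChHs1} and \cite[\S2]{pollack-weston}, invoked under our running hypotheses), both statements follow by direct substitution, and for $k=2$ one may alternatively cite \cite{pollack-weston} verbatim while Remark \ref{rem:p-stabilization} and Ihara's lemma handle the passage between $f$ and $f_0$.
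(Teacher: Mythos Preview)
Your proposal is correct and follows essentially the same approach as the paper: both parts are recognised as standard comparisons between periods and congruence numbers already available in the literature, and the argument reduces to invoking the appropriate references. The paper's proof is even terser than yours --- for part (1) it cites \cite[Proposition 6.4(2)]{pollack-weston} when $k=2$ and \cite[Corollary 6.7]{KO} or \cite[Remark 7.8]{Hsieh} for general $k$, and for part (2) it cites \cite[Lemma 12.1]{SU} directly (rather than unpacking the cup-product argument you sketch via \cite{pollack-weston} and \cite{ChHs1}); your outline of the underlying mechanism is accurate, but you may wish to align your citations with these more precise pointers, particularly \cite{KO} and \cite{Hsieh} for higher weight in (1) and \cite{SU} for (2).
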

\begin{proof}
The first statement follows from \cite[Proposition 6.4(2)]{pollack-weston} if $k=2$. For $k\geq 2$, this is 
\cite[Corollary 6.7]{KO} or \cite[Remark 7.8]{Hsieh}.
The second statement follows from \cite[Lemma 12.1]{SU}. 
\end{proof}
The same result holds for $p$-ordinary $p$-stabilized newforms of level $\Gamma_0(Np)$. 
See also Remark \ref{rem:p-stabilization}.

\section{Comparison between Skinner--Urban and Chida--Hsieh: weight two forms}\label{SU-CH-section} 

In this section we compare the one-variable anticyclotomic specialisation of the three-variable $p$-adic $L$-function of Skinner--Urban with the $p$-adic $L$-function of Chida--Hsieh, in the case of modular forms of weight $2$. Although this result is well-known to the experts, and already stated in the case of elliptic curves (see \cite[\S3.6.3]{SU} but with no proof), we add a proof which might serve as a reference. 

Let $f=\mathbf{f}_\phi\in S_2(\Gamma_0(Np))$ as in \eqref{f_phi} the specialisation of a Hida family $\mathbf{f}$ at an arithmetic character $\phi$, with trivial character $\chi_\phi$ and weight $2$.
Then $f$ is either a newform of level $\Gamma_0(Np)$, in which case we set $f_0=f$, or $f$ is the ordinary $p$-stabilisation of an ordinary newform $f_0$ of level $\Gamma_0(N)$; let as before $\alpha_p=\alpha_p(f_0)$ be the unit root of the Hecke polynomial of $f_0$ at $p$ if $f$ is old at $p$, and $\alpha_p = a_p(f)$ if $f$ is new at $p$, where $f=\sum_{n\geq 1}a_n(f)q^n$. 
 
Let $K$ be an imaginary quadratic field of discriminant prime to $Np$ in which $p$ is split, inducing the factorisation $N=N^+N^-$ as before. Let $\chi:\mathbb{A}_K^\times/K^\times\rightarrow\C^\times$ be a finite order anticyclotomic algeraic character and denote $\hat\chi$ be its $p$-adic avatar.

For any prime $\ell\mid S$, we denote $L_\ell(f_0,\chi,s)$ the local Euler factor at $\ell$ of the complex $L$-function $L_K(f_0,\chi,s)$, 
so that we have  $L_K(f,\chi,s)=\prod_\ell L_\ell(f_0,\chi,s)$, where the product is over all prime numbers $\ell$. 
The description of the Euler factors $L_\ell(f,\chi,s)$, following \S\ref{Euler factors}, is the following.
For each prime $v$ of $K$, let $\ell$ be the rational prime lying below $v$  and let $\alpha_\ell(f)$ and $\beta_\ell(f)$ the roots of the Hecke polynomial of $f$ at $\ell$; here, with a slight abuse of notation, we understand that some of these roots may be zero. 
Define 
\[L_v(f,\chi,s)=(1-\chi(v)\alpha_{\N(v)}(f)\N(v)^{-s})^{-1}\cdot (1-\chi(v)\beta_{\N(v)}(f)\N(v)^{-s})^{-1}.
\] 
Then for each rational prime $\ell$, we have 
\[L_\ell(f,\chi,s)=\prod_{v\mid\ell}L_v(f,\chi,s).\] 
We also have the following alternative description of these factors. Let
\[\mathit{\Psi}_ K ^- :\mathbb A_ K ^\times/ K ^\times
\overset{\mathrm{rec}_ K }\longrightarrow \Gal( K ^\mathrm{ab}/ K )\overset{\boldsymbol{\varepsilon}_ K^-}
\longrightarrow (\Lambda_ K ^-)^\times\]
be the map obtained from the reciprocity map and the canonical projections. Then $\hat\chi$ extends to a homomorphism, denoted with the same symbol, $\hat\chi:\Lambda_K^-\rightarrow\bar\Q_p$. If $\p_v$ is a uniformizer element of $v$, then  
\[L_v(f,\chi,s)=(1-\hat\chi(\mathit{\Psi}_ K ^-(\pi_v)) \cdot \alpha_{\N(v)}(f)\cdot\N(v)^{-s})^{-1}\cdot (1-\hat\chi(\mathit{\Psi}_ K ^-(\pi_v))\cdot\beta_{\N(v)}(f) \cdot \N(v)^{-s})^{-1}.\]
Define for each prime ideal $v\mid S$ of $K$ the element $\mathcal{E}_v\in \mathcal{O}\pwseries{\Gamma_K^-}$ by  
\[\mathcal{E}_v=(1-\mathit{\Psi}_ K ^-(\p_v)\cdot\alpha_{\N(v)}(f) \cdot \N(v)^{-{1}})\cdot (1-\mathit{\Psi}_ K ^-(\p_v)\cdot\beta_{\N(v)}(f) \cdot \N(v)^{-{1}}).\]
Set $\mathcal{E}_\ell=\prod_{v\mid\ell}\mathcal{E}_v$ and 
$\mathcal{E}_S=\prod_{\ell\mid S}\mathcal{E}_\ell$. In particular, $\hat\chi(\mathcal{E}_\ell^{-1})=L_\ell(f,\chi,1)$, and therefore 
\begin{equation}\label{completed-incompleted1}
L^S_K(f,\chi,1)=\hat\chi(\mathcal{E}_S)\cdot L_K(f,\chi,1).\end{equation}
Moreover, since $S$ contains $p$, we have 
\begin{equation}\label{fg}
L^S_K(f,\chi,1)=L^S_K(f_0,\chi,1).\end{equation}
Define $L^\mathrm{CH}_{S,f}=\mathcal{E}_S\cdot\Theta_p^2(f)$.

\begin{theorem} \label{SU-CH} 
Let $f$ be the $p$-stabilization of a $p$-ordinary newform $f_0$ of level $\Gamma_0(N)$. Then under Assumption $\ref{ass}$ we have 
$$(L^\mathrm{SU}_{S,f})=(L^\mathrm{CH}_{S,f}) \in \mathcal{O}\pwseries{\Gamma_K^-}.$$
\end{theorem}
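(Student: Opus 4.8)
The plan is to compare $L^\mathrm{SU}_{S,f}$ and $L^\mathrm{CH}_{S,f}$ by evaluating both sides at the finite order characters $\hat\chi\colon\Gamma_K^-\to\bar\Q_p^\times$. Since $\mathcal{O}\pwseries{\Gamma_K^-}$ is a domain and a nonzero element of it has only finitely many zeros, an element killed by all such $\hat\chi$ must vanish; hence it suffices to exhibit a single unit $u\in\mathcal{O}\pwseries{\Gamma_K^-}^\times$ with $\hat\chi(L^\mathrm{SU}_{S,f})=\hat\chi(u)\cdot\hat\chi(L^\mathrm{CH}_{S,f})$ for every $\hat\chi$. Each such $\hat\chi$ is the $p$-adic avatar of an anticyclotomic finite order Hecke character $\chi$ of $K$ of $p$-power conductor $p^n$; since $k=2$, the constraint $-(k/2-1)\le m\le k/2-1$ forces the infinity type to be $(0,0)$, so every $\chi$ occurring is of exactly this shape, and both interpolation formulas \eqref{interpolation SU} and \eqref{eq-Ch-Hs} apply to it.

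First I would unwind the Skinner--Urban side. By Definition~\ref{def su} and the construction of $\pi_f^\mathrm{ac}$, evaluating $L^\mathrm{SU}_{S,f}=\pi_f^\mathrm{ac}(\mathcal{L}^{S,\dagger}_{\mathbf{f},K})$ at $\hat\chi$ amounts to evaluating the three-variable $\mathcal{L}^S_{\mathbf{f},K}$ at the arithmetic character of $\mathbb{I}_K$ that restricts to $\phi$ (of weight $k_\phi=2$) on $\mathbb{I}$, is trivial on $\Gamma_K^+$, and equals the twist of $\hat\chi$ by the critical character $\mathbf{\Theta}^{-1}$ on $\Gamma_K^-$. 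The role of this twist is precisely to make the resulting idele class character $\theta_\phi$ anticyclotomic of $p$-power conductor, so that Lemma~\ref{gauss} computes its global Gauss sum as $\mathfrak{g}(\theta_\phi)=\pm p^n$, with the sign fixed by the chosen square root of $\omega^{k-2}$ and hence independent of $\chi$. Plugging into \eqref{interpolation SU} with $k_\phi=2$: the factors $((k_\phi-2)!)^2$ and $\N(\mathfrak{f}_{\theta_\phi}\mathfrak{d})^{k_\phi-2}$ are $1$; $u_{\mathbf{f}_\phi}$ is a unit; $a(p,\mathbf{f}_\phi)^{\ord_p(\N(\mathfrak{f}_{\theta_\phi}))}=\alpha_p^{2n}$ is a $p$-adic unit by $p$-ordinarity; and, using \eqref{fg} to pass to $f_0$ in the $L$-value, one obtains
\[
\hat\chi(L^\mathrm{SU}_{S,f})=(\textrm{$p$-adic unit})\cdot\frac{p^n}{\alpha_p^{2n}}\cdot\frac{L^S_K(f_0,\chi,1)}{(-2\pi i)^2\cdot\Omega^+_f\,\Omega^-_f},
\]
the unit depending only on $\mathbf{f}_\phi$.

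Next I would unwind the Chida--Hsieh side from \eqref{eq-Ch-Hs}. Since $f$ is the $p$-stabilisation of $f_0$ we have $t=0$ and $\epsilon_p(f)=1$, and with $k=2$, $m=0$: $\Gamma(k/2\pm m)=1$; $p^{n(k-1)}/\alpha_p^{2n}=p^n/\alpha_p^{2n}$; $u(f,p)=u_K^2\sqrt{D}\cdot\chi(\mathfrak{N}^+)$; and $e_p(f,\chi)^{2-t}=1$ when $n\ge1$, while for $n=0$ the factor $e_p(f,\chi)$ is a $p$-adic unit and matches the Euler factor $\mathcal{E}_p$ occurring in $L^\mathrm{CH}_{S,f}=\mathcal{E}_S\cdot\Theta_p^2(f)$. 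Applying $\mathcal{E}_S$ and \eqref{completed-incompleted1} this gives
\[
\hat\chi(L^\mathrm{CH}_{S,f})=(\textrm{$p$-adic unit})\cdot\chi(\mathfrak{N}^+)\cdot\frac{p^n}{\alpha_p^{2n}}\cdot\frac{L^S_K(f_0,\chi,1)}{\Omega_{f,N^-}},
\]
where $\chi(\mathfrak{N}^+)=\hat\chi(\gamma_{\mathfrak{N}^+})$ for $\gamma_{\mathfrak{N}^+}\in\Gamma_K^-$ the image of $\mathfrak{N}^+$ under $\mathrm{rec}_K$ and the projection $\Gamma_K\to\Gamma_K^-$ (here $\mathfrak{N}^+$ is coprime to $p$ and $\chi$ is unramified outside $p$, so this is well defined), hence $\chi(\mathfrak{N}^+)$ is $\hat\chi$ of the group-like unit $[\gamma_{\mathfrak{N}^+}]\in\mathcal{O}\pwseries{\Gamma_K^-}^\times$. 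Dividing the two displays, the $L$-values, the powers $p^n/\alpha_p^{2n}$ and the $p$-Euler factors all cancel, leaving $\hat\chi(L^\mathrm{SU}_{S,f})/\hat\chi(L^\mathrm{CH}_{S,f})=(\textrm{$p$-adic unit})\cdot\hat\chi([\gamma_{\mathfrak{N}^+}])^{-1}\cdot\Omega_{f,N^-}/((-2\pi i)^2\,\Omega^+_f\,\Omega^-_f)$. By Remark~\ref{rem:p-stabilization} and Proposition~\ref{congruence} in its form for $p$-stabilised newforms, $\Omega_{f,N^-}=(\textrm{$p$-adic unit})\cdot(4\pi)^2\langle f_0,f_0\rangle_N/\eta_{f_0,N^-}$ and $\Omega^+_f\,\Omega^-_f=(\textrm{$p$-adic unit})\cdot\langle f_0,f_0\rangle_N/\eta_{f_0}$, so the period ratio equals, up to a $p$-adic unit, $-4\cdot\eta_{f_0}/\eta_{f_0,N^-}$; here $-4$ is a $p$-adic unit since $p\nmid6$, and $\eta_{f_0}=(\textrm{$p$-adic unit})\cdot\eta_{f_0,N^-}$ because $\bar\rho$ is irreducible and ramified at all primes dividing $N^-$ (Assumption~\ref{ass}(8),(10)), which by level lowering rules out congruences of $f_0$ with forms old at a prime dividing $N^-$, exactly as in Pollack--Weston. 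Thus $\hat\chi(L^\mathrm{SU}_{S,f})=\hat\chi(u)\cdot\hat\chi(L^\mathrm{CH}_{S,f})$ for all $\hat\chi$ with $u$ equal to $[\gamma_{\mathfrak{N}^+}]^{-1}$ times a fixed $p$-adic unit, and Theorem~\ref{SU-CH} follows.

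I expect the main obstacle to be the first step: correctly unwinding $\mathrm{Tw}_{\mathbf{\Theta}^{-1}}$ together with the two specialisation maps so that the Hecke character governing the Skinner--Urban interpolation becomes literally the anticyclotomic $\chi$ governing the Chida--Hsieh interpolation (reconciling the two normalisations of the $p$-adic avatar and of the reciprocity map), and then keeping careful track of the accompanying fudge factors $\mathfrak{g}(\theta_\phi)$, $\N(\mathfrak{f}_{\theta_\phi}\mathfrak{d})^{k_\phi-2}$ and $a(p,\mathbf{f}_\phi)^{\ord_p\N(\mathfrak{f}_{\theta_\phi})}$ through this identification. Once the two characters are aligned, Lemma~\ref{gauss}, Proposition~\ref{congruence}, Remark~\ref{rem:p-stabilization} and the level-lowering comparison of congruence ideals assemble directly into the claimed equality of ideals.
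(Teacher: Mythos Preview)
Your approach is essentially the one the paper takes: evaluate both $p$-adic $L$-functions at finite order anticyclotomic characters of $p$-power conductor, specialise the interpolation formulas \eqref{interpolation SU} and \eqref{eq-Ch-Hs} to $k=2$, compare periods via Proposition~\ref{congruence} and Remark~\ref{rem:p-stabilization}, absorb $\chi(\mathfrak{N}^+)$ into a group-like unit of $\mathcal{O}\pwseries{\Gamma_K^-}$, and conclude by Weierstrass preparation. Your appeal to level-lowering \`a la Pollack--Weston for $\eta_{f_0}/\eta_{f_0,N^-}\in\mathcal{O}^\times$ is exactly in the spirit of the paper's citation of \cite[Lemma 9.2]{SZ}.

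There is, however, one genuine gap. You assert that the sign in $\mathfrak{g}(\theta_\phi)=\pm p^n$ from Lemma~\ref{gauss} is ``fixed by the chosen square root of $\omega^{k-2}$ and hence independent of $\chi$''. This justification is wrong: the choice of $\omega^{(k-2)/2}$ enters only into the critical character $\Theta$, not into the Gauss sum, and for $k=2$ with trivial nebentypus the critical twist is trivial on the relevant specialisation anyway. The proof of Lemma~\ref{gauss} shows that the sign comes from the values $\chi_v(d_\ell)$ at primes $v\mid D$ together with $\chi_\p(-1)$, quantities which a priori vary with $\chi$. The paper does \emph{not} attempt to pin this sign down; instead it sets $u_\mathrm{SU}(\hat\chi)=u_f\cdot\mathfrak{g}(\theta_\phi)/p^n=\pm u_f$, observes that this unit assumes at most two values as $\chi$ ranges over all characters of conductor $p^n$ with $n>t_\phi$, and then by pigeonhole restricts to an infinite subset on which the sign is constant---which is all that Weierstrass preparation requires. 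Your argument as written needs either this pigeonhole step, or a separate argument that the sign is in fact constant (for example: since $\hat\chi$ factors through $\Gamma_K^-\simeq\Z_p$ with $p$ odd, every value of $\chi$ is a $p$-power root of unity, so any value that is simultaneously $\pm1$ must equal $+1$).
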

\begin{proof}We show that the powers series $L^\mathrm{SU}_{S,f}$ and $L^\mathrm{CH}_{S,f}$ agree when evaluated at infinitely many characters $\chi$, up to a $p$-adic unit which is independent of $\chi$, and then the result follows from the Weierstrass preparation theorem. Fix a finite order character $\hat\chi:\Gamma_K^-\rightarrow\bar\Q_p$ of conductor $p^n$ for some integer $n>t_\phi $, which is the $p$-adic avatar of a finite order anticyclotomic algebraic Hecke character $\chi:\mathbb{A}_K^\times/K^\times\rightarrow\C^\times$ of conductor $p^n$ (so
the infinity type of $\chi$ is $(0,0)$, and $\chi$ and $\hat\chi$ are simply related by the geometrically normalised reciprocity map). 

Choose $\phi$ such that its restriction to the anticyclotomic line $\Gamma_K^-$ is $\hat\chi$. Define  
\[u_\mathrm{SU}(\hat\chi)=u_f\cdot{\mathfrak{g}(\theta_\phi)}/{p^n}.\]
By Lemma \ref{gauss}, 
$\mathfrak{g}(\theta_\phi)=\pm p^n$, so $u_\mathrm{SU}(\hat\chi)=\pm u_f$ 
and therefore $u_\mathrm{SU}(\hat\chi)$ 
is a $p$-adic unit, which depends on $\chi$ only up to a sign. We therefore have: 
\[L^\mathrm{SU}_{S,f}(\hat\chi)=
\frac{u_\mathrm{SU}(\hat\chi)\cdot p^n}{\alpha_p^{2n}}
\cdot \frac{L^S_K(f_0,\chi,1)}{\pi^2\cdot\Omega_{f_0}^+\cdot\Omega_{f_0}^-} .\]
On the other hand, by Equation \eqref{eq-Ch-Hs} and Definition \ref{def-Ch-Hs}, we have  
\[
L_{S,f}^\mathrm{CH}(\hat\chi)=
\frac{\chi(\mathfrak{N}^+)\cdot u_\mathrm{CH}\cdot p^{n}}{\alpha_p^{2n}}\cdot
\frac{L^S_K(f,\chi,1)}{  \Omega_{f_0,N^-}},\]
where $u_\mathrm{CH}$ is a unit which does not depend on $\chi$. 
We thus have 
\[{L_{S,f}^\mathrm{SU}}(\hat\chi)=\frac{u_\mathrm{SU}(\chi)}{\chi(\mathfrak{N}^+)\cdot u_\mathrm{CH}}\cdot
\frac{\Omega_{f_0,N^-}}{ \pi^2 \cdot \Omega_{f_0}^+\cdot\Omega_{f_0}^-}\cdot {L_{S,f}^\mathrm{CH}}(\hat\chi).\]
 Now $u_\mathrm{CH}$ is a $p$-adic unit independent of $\chi$.  
Thanks to Remark \ref{rem:p-stabilization}, $\Omega_f^\pm=\Omega_{f_0}^\pm$, namely, the periods of $f$ and $f_0$ differ by $p$-adic units. 
By Proposition \ref{congruence}, the 
quotient between the Gross periods $\Omega_{f_0,N^-}$ and 
$(4 \pi)^2\cdot  \Omega_{f_0}^+\cdot\Omega_{f_0}^-$ is equal to $\eta_{f_0}/\eta_{f_0,N^-}$, which is a $p$-adic unit under Assumption \ref{ass} by \cite[Lemma 9.2]{SZ} with $p >2$. 
Moreover, we have infinitely many Hecke characters $\chi$ as above such that 
$u_\mathrm{SU}(\chi)=\epsilon$ for at least one choice of $\epsilon\in\{\pm1\}$, which we fix in the following considerations.  
Therefore $u={u_\mathrm{SU}(\chi)}/{u_\mathrm{CH}}$ 
is a unit independent of $\chi$, for $\chi$ in an infinite set of characters. Define 
$\alpha=\mathrm{rec}_K(\mathfrak{N}^+)$, so that 
$\hat{\chi}(\alpha)=\chi(\mathfrak{N}^+)$. 
The values $u  \cdot {L_{S,f}^\mathrm{SU}}(\hat\chi)$ and $\hat\chi(\alpha) \cdot  \eta_{f,N^-} \cdot {L_{S,f}^\mathrm{CH}}(\hat\chi)$ are equal for infinitely many anticyclotomic Hecke characters $\chi$, and therefore, 
using the Weierstrass preparation theorem, we see that 
$u \cdot  {L_S^\mathrm{SU}}$ and $\alpha \cdot  \eta_{f,N^-} \cdot  {L_{S,f}^\mathrm{CH}}$ are equal in $\mathcal{O}\pwseries{\Gamma_K^-}$. Now $\alpha$, $\eta_{f,N^-}$ and $u$ are units, completing the proof.\end{proof}

\section{Hida--B\"{u}y\"{u}kboduk--Lei $p$-adic $L$-function}
We recall a variant of Hida--Perrin-Riou $p$-adic $L$-function in \cite{Hida1}, \cite{PR}, recently developed by \cite{LLZ}, \cite{BL1}, \cite{BL2}. We will mainly follow the presentation of \cite[\S2]{BL1}, \cite[Appendix B]{BL2}. 

Let $f_0$ be a newform of level $\Gamma_0(N)$ with $p\nmid N$ and even weight $k\geq 2$. 
Let $\alpha_p$ and $\beta_p$ are the roots of the Hecke polynomial of $f_0$ at $p$.
Denote by $\alpha_p$ the unit root and by $f$ the $p$-stabilisation of $f_0$ with $U_p$-eigenvalue $\alpha_p$, as before.

Let $\chi$ be an algebraic Hecke character of $K$ of infinity type $(t_1,t_2)$ and denote 
\[\hat\chi:{\mathbb{A}_{K,f}^\times}/K^\times\longrightarrow\bar\Q_p^\times\] the $p$-adic avatar of $\chi$; using the same conventions as in Section \ref{CH}, $\hat\chi$ is identified with a Galois character, denoted with the same symbol, 
$\hat\chi:G_K^\mathrm{ab}\rightarrow\bar\Q_p^\times$ by composing with the (geometrically normalised) reciprocity map. 

Denote by $\Sigma^{(1)}$ the set of Hecke characters of infinity type $(t_1,t_2)$ 
with \[-(k/2-1)\leq t_1,t_2\leq k/2-1.\] 
Fix an integral ideal $\mathfrak{f}$ prime to $p$ and denote $H_{\mathfrak{f}p^\infty}$ the ray class group of conductor $\mathfrak{f}p^\infty$, by which we mean the inverse limit of all ray class groups of conductors $\mathfrak{f}\p^m\bar{\p}^n$ over all non-negative integers $m$ and $n$; then we have a projection map $G_K^\mathrm{ab}\rightarrow H_{\mathfrak{f}p^\infty}$. 

Consider the family of $p$-depleted theta series, following  \cite[\S 6.2]{LLZ}, 
\cite[\S 2.2]{BL1}, \cite[Appendix B]{BL2}: 
\[\Theta=\sum_{(\mathfrak{a},p)=1}[\mathfrak{a}]q^{\N(\mathfrak{a})}\]
where the sum is over all ideals $\mathfrak{a}$ of $K$ which are coprime to $p$ and $[\mathfrak{a}]$ is the class of $\mathfrak{a}$ in $H_{\mathfrak{f}p^\infty}$; so $\Theta$ is an element of $\mathcal{O}\pwseries{H_{\mathfrak{f}p^\infty}}\pwseries{q}$, where $\mathcal{O}\pwseries{H_{\mathfrak{f}p^\infty}}$  is the Iwasawa algebra of the $p$-adic Lie group $H_{\mathfrak{f}p^\infty}$ with coefficients in $\mathcal{O}$, where $\mathcal{O}$ is, as before, the valuation ring of a finite extension of $\Q_p$. For any Hecke character $\chi$ of conductor $\mathfrak{f}p^\infty$, we then have 
\[\Theta(\hat\chi)=
\hat\chi(\Theta)=\sum_{(\mathfrak{a},p)=1}
\hat\chi(\mathfrak{a})q^{\N(\mathfrak{a})}.\] 

For any Hecke character $\chi$ of prime to $p$-conductor $\mathfrak{f}$, such that $\chi\N^{j-k/2}$ belongs to $\Sigma^{(1)}$ (so, $1\leq j\leq k-1$), we introduce the following quantities: 
\begin{itemize}
\item For each integer $j$,
 \[\mathcal{E}(f,\chi,j)=\prod_{\mathfrak{q}\in\{\p,\bar\p\}}\left(1-\frac{p^{j-1}}{\alpha_p\chi(\mathfrak{q})}\right)\left(1-\frac{\beta_p\chi(\mathfrak{q})}{p^j}\right);\] 
\item $\mathcal{E}(f)=1-\frac{\beta_p}{p\alpha_p}$;
\item $\mathcal{E}^*(f)=1-\frac{\beta_p}{\alpha_p}$. 
\end{itemize}
By \cite[Theorem 2.1 in Appendix B]{BL2} there exists a $p$-adic $L$-function 
\[L(f/K,\Sigma^{(1)})\in 
\mathcal{O}\pwseries{H_{\mathfrak{f}p^\infty}}\otimes_{\mathcal{O}}L,\] 
where $L$ is a sufficiently big finite extension of $\Q_p$ with valuation ring $\mathcal{O}$, satisfying the following interpolation formula. 
If $\chi$ is a finite idele class character, and $j$ is an integer such that $1\leq j\leq k-1$, then:
\begin{itemize}
\item If the conductor of $\chi$ is prime to $p$, we have 
\[L(f/K,\Sigma^{(1)})(\hat\chi\cdot\N^{j-k/2})=\frac{\mathcal{E}(f,\chi,j)\cdot 
u\cdot \Gamma(j)^2}{\mathcal{E}(f)\cdot\mathcal{E}^*(f)}\cdot
\frac{L_K(f_0,\chi,j)}{\pi^{2j}\cdot\langle f_0,f_0\rangle_N}, 
\]
where $u$ is a unit independent of $\chi$; 
\item  If the $p$-primary part of the conductor of $\chi$ is $p^n$ with $n\geq 1$, we have 
\[L(f/K,\Sigma^{(1)})(\hat\chi\cdot\N^{j-k/2})=
\frac{p^{2jn}\cdot\tau(\hat\chi)\cdot u\cdot \Gamma(j)^2}{\alpha_p^{2n}\cdot\mathcal{E}(f)\cdot\mathcal{E}^*(f)}\cdot
\frac{L_K(f_0,\bar\chi,j)}{\pi^{2j}\cdot\langle f_0,f_0\rangle_N},\]
where $u$ is a unit independent of $\chi$ and $\tau(\hat\chi)$ is 
the root number of $\Theta(\hat\chi)$. 
\end{itemize}

We can view $L(f/K,\Sigma^{(1)})$ as an element of 
{$\mathcal{O}\pwseries{\Gamma_K^-}\otimes_{\mathcal{O}}L$} by composing with the canonical 
projection $\pi:H_{\mathfrak{f}p^\infty}\rightarrow \Gamma_K^-$, and the composition is independent of the chosen $\mathfrak{f}$ as long as $p$ does not divide the cardinality of the ray class group of conductor $\mathfrak{f}$. Then define for any $\mathcal{O}$-algebra homomorphism $\chi:\mathcal{O}\pwseries{\Gamma_K^-}\rightarrow\bar\Q_p$, 
\[L^\mathrm{HBL}_{f,j}(\chi)=L(f/K,\Sigma^{(1)})((\chi\circ\iota\circ\pi)\cdot\N^{j-k/2})\] where $\pi: \mathcal{O}\pwseries{H_{\mathfrak{f}p^\infty}}\rightarrow \mathcal{O}\pwseries{\Gamma_K^-}$ is the canonical map and $\iota:\mathcal{O}\pwseries{\Gamma_K^-}\rightarrow\mathcal{O}\pwseries{\Gamma_K^-}$ is the $\mathcal{O}$-algebra map induced from 
$\gamma\mapsto\gamma^{-1}$ for $\gamma\in\Gamma_K^-$. In particular, 
if $\chi$ is a finite idele class character such that the $p$-primary part of its conductor is $p^n$ with $n\geq 1$, and $j$ is an integer such that $1\leq j\leq k-1$, then
\[L^\mathrm{HBL}_{f,j}(\hat\chi)=
\frac{p^{2jn}\cdot\tau(\hat\chi)\cdot u\cdot \Gamma(j)^2}{\alpha_p^{2n}\cdot\mathcal{E}(f)\cdot\mathcal{E}^*(f)}\cdot
\frac{L_K(f_0,\chi,j)}{\pi^{2j}\cdot\langle f_0,f_0\rangle_N},\]
where, as before, $u$ denotes a unit independent of $\chi$ and $\tau(\hat\chi)$ is 
the root number of $\Theta(\hat\chi)$ (note that, up to $p$-adic units independent of $\chi$, the root number of the Theta series associated with $\chi$ is the same as the root number of the Theta series associated with $\bar{\chi}$ by the explicit formulas in \S\ref{sectheta}). 

\section{Comparison between Hida--B\"{u}y\"{u}kboduk--Lei and Chida--Hsieh}
\label{Sec-HBL-CH} 
As in \S\ref{SU-CH-section}, 
let $f=\mathbf{f}_\phi\in S_k(\Gamma_0(Np))$ be the ordinary $p$-stabilisation of a newform $f_0$ of level $\Gamma_0(N)$. We assume that $k\geq 4$ is even and $f$ has trivial character. Let $K$ be an imaginary quadratic field of discriminant prime to $Np$ as before, so $p$ is split in $K$ and the decomposition $N=N^+N^-$ satisfies the condition that $N^-$ is a square-free product of an odd number of primes. 

\begin{theorem}\label{Hida-CH} Under Assumption $\ref{ass}$, if $k\geq 4$ we have  
$(\eta_{f,N^-}\cdot L_{f,k/2}^\mathrm{HBL})=(L_f^\mathrm{CH})$ as ideals of $\mathcal{O}\pwseries{\Gamma_K^-}$. 
\end{theorem}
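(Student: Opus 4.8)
The plan is to follow the strategy of the proof of Theorem~\ref{SU-CH}: I will evaluate the two power series $\eta_{f,N^-}\cdot L^{\mathrm{HBL}}_{f,k/2}$ and $L^{\mathrm{CH}}_f$ of $\mathcal{O}\pwseries{\Gamma_K^-}$ at an infinite family of finite order characters of $\Gamma_K^-$, check that the resulting values agree up to a $p$-adic unit independent of the character (and up to a factor of the form $\hat\chi$ evaluated on a fixed unit of $\mathcal{O}\pwseries{\Gamma_K^-}$), and then conclude by the Weierstrass preparation theorem. Concretely, I fix a finite order anticyclotomic character $\hat\chi:\Gamma_K^-\to\bar\Q_p^\times$ of conductor $p^n$ with $n\geq 1$, which is the $p$-adic avatar of a finite order anticyclotomic Hecke character $\chi$ of infinity type $(0,0)$, and I compare the interpolation formulas at $\hat\chi$.

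For the $L^{\mathrm{CH}}_f$-side I use Equation~\eqref{eq-Ch-Hs}: since $f$ is a $p$-stabilisation we have $t=0$ and $\epsilon_p(f)=1$, since $n\geq 1$ we have $e_p(f,\chi)=1$, and since $\chi$ is of finite order we have $m=0$, so
\[
L^{\mathrm{CH}}_f(\hat\chi)=u_K^2\cdot\sqrt{D}\cdot D^{k-2}\cdot\chi(\mathfrak{N}^+)\cdot\Gamma(k/2)^2\cdot\frac{p^{n(k-1)}}{\alpha_p^{2n}}\cdot\frac{L_K(f_0,\chi,k/2)}{\Omega_{f,N^-}}.
\]
For the $L^{\mathrm{HBL}}_{f,k/2}$-side I specialise the interpolation formula (the case $n\geq 1$) at $j=k/2$, which gives
\[
L^{\mathrm{HBL}}_{f,k/2}(\hat\chi)=\frac{p^{kn}\cdot\tau(\hat\chi)\cdot u\cdot\Gamma(k/2)^2}{\alpha_p^{2n}\cdot\mathcal{E}(f)\cdot\mathcal{E}^*(f)}\cdot\frac{L_K(f_0,\chi,k/2)}{\pi^{k}\cdot\langle f_0,f_0\rangle_N}
\]
with $u$ a unit independent of $\chi$; here $\mathcal{E}(f)=1-\beta_p/(p\alpha_p)$ and $\mathcal{E}^*(f)=1-\beta_p/\alpha_p$ are $p$-adic units, since $\ord_p(\beta_p)=k-1\geq 3$ and $\ord_p(\beta_p/(p\alpha_p))=k-2\geq 2$ (this, together with the fact that for $k\geq 4$ the form $f$ is genuinely the $p$-stabilisation of a form $f_0\in S_k(\Gamma_0(N))$, is where the hypothesis $k\geq 4$ is used).

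Next I cancel the common factors. The conceptual input is that the two formulas interpolate the same complex $L$-value $L_K(f_0,\chi,k/2)$ but against different periods---the Gross period $\Omega_{f,N^-}$ on one side and the Petersson norm $\langle f_0,f_0\rangle_N$ on the other---and, by Proposition~\ref{congruence}(1), Remark~\ref{rem:p-stabilization} and the remark following Proposition~\ref{congruence}, these two periods differ precisely by $(4\pi)^k\cdot\eta_{f,N^-}^{-1}$ up to a $p$-adic unit; this is exactly the congruence number occurring in the statement. Substituting $\Omega_{f,N^-}=(4\pi)^k\langle f_0,f_0\rangle_N/\eta_{f,N^-}$ makes the transcendental ratio $L_K(f_0,\chi,k/2)/\langle f_0,f_0\rangle_N$ and the factor $\alpha_p^{-2n}$ common to both sides, and since $p\nmid 6N$ the constant $(4\pi)^k/\pi^k=4^k$ is a $p$-adic unit, so the powers of $\pi$ cancel as well. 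The factors $u_K^2$, $\sqrt{D}$, $D^{k-2}$, $\Gamma(k/2)^2$, $u$, $\mathcal{E}(f)$, $\mathcal{E}^*(f)$ are $p$-adic units not depending on $\chi$ (using $p\nmid 6D$ and $k\geq 4$), and $\chi(\mathfrak{N}^+)=\hat\chi(\alpha)$ for $\alpha=\rec_K(\mathfrak{N}^+)\in\mathcal{O}\pwseries{\Gamma_K^-}^\times$, exactly as in the proof of Theorem~\ref{SU-CH}. Carrying out the division one obtains
\[
L^{\mathrm{CH}}_f(\hat\chi)=v\cdot\hat\chi(\alpha)\cdot\frac{1}{p^{n}\cdot\tau(\hat\chi)}\cdot\eta_{f,N^-}\cdot L^{\mathrm{HBL}}_{f,k/2}(\hat\chi)
\]
for a fixed $p$-adic unit $v$ independent of $\chi$.

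The main obstacle will be the remaining factor $p^{n}\cdot\tau(\hat\chi)$: I must show that it is a $p$-adic unit which depends on $\chi$ only through a factor of the form $\hat\chi$ of a unit of $\mathcal{O}\pwseries{\Gamma_K^-}$ (in practice, only up to a root of unity), and then pass to an infinite subfamily of characters on which that factor is constant. This is where Lemma~\ref{gauss} and the transformation formula of \S\ref{sectheta} come in: the root number $\tau(\hat\chi)$ of the $p$-depleted theta series $\Theta(\hat\chi)$ should, in the normalisation of \cite{BL2}, be expressible through the Gauss sum of the associated anticyclotomic unitary Hecke character of conductor $p^n\mathcal{O}_K$, which equals $\pm p^n$ by Lemma~\ref{gauss}; pinning down this identification precisely, and using that the root numbers of the theta series attached to $\chi$ and to $\bar\chi$ agree up to a $\chi$-independent $p$-adic unit (by the formulas of \S\ref{sectheta}), should show that $p^n\tau(\hat\chi)$ has the required shape. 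Granting this, one rewrites the displayed identity, absorbing $\hat\chi(\alpha)$ into the Iwasawa-algebra unit $\alpha$ and the remaining $\chi$-independent units into a unit $v'\in\mathcal{O}^\times$, as an equality of the values at $\hat\chi$ of the power series $v'\cdot L^{\mathrm{CH}}_f$ and $\alpha\cdot\eta_{f,N^-}\cdot L^{\mathrm{HBL}}_{f,k/2}$, valid for all $\hat\chi$ in an infinite family; the Weierstrass preparation theorem then gives the corresponding equality of power series (the fact that $L^{\mathrm{HBL}}_{f,k/2}$ a priori lies in $\mathcal{O}\pwseries{\Gamma_K^-}\otimes_{\mathcal{O}}L$ is harmless), and since $v'$ and $\alpha$ are units, $(L^{\mathrm{CH}}_f)=(\eta_{f,N^-}\cdot L^{\mathrm{HBL}}_{f,k/2})$.
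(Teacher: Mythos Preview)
Your proposal is correct and follows essentially the same approach as the paper: compare the two interpolation formulas at finite-order anticyclotomic characters of conductor $p^n$ with $n\geq 1$, use Proposition~\ref{congruence}(1) to trade the Gross period for $\eta_{f,N^-}$ times the Petersson norm, observe that $\mathcal{E}(f)$ and $\mathcal{E}^*(f)$ are units for $k\geq 4$, and conclude via Weierstrass preparation. The one step you leave tentative---controlling $p^n\cdot\tau(\hat\chi)$---is handled in the paper by citing Hida's explicit formula \cite[(5.5b)]{Hida2} together with the remark that the slash action in \cite{BL1,BL2} is normalised with $\det(\gamma)^{\ell-1}$ rather than the $\det(\gamma)^{\ell/2}$ of \S\ref{sectheta}, which gives $\tau(\hat\chi)=p^{-n}$ up to a $p$-adic unit \emph{independent} of $\chi$ (so no passage to a subfamily is needed here, unlike in Theorem~\ref{SU-CH}); your proposed route via Lemma~\ref{gauss} and the transformation formula of \S\ref{sectheta} is a re-derivation of this same fact. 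One small inaccuracy: $\Gamma(k/2)^2=((k/2-1)!)^2$ is not in general a $p$-adic unit, but this is harmless since it occurs in both interpolation formulas and cancels in the ratio.
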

\begin{proof}
As in the proof of Theorem \ref{SU-CH}, we show the equality of the two $p$-adic $L$-functions when evaluated at infinitely many characters $\chi$, up to a $p$-adic unit which is independent of $\chi$, and then the result follows from the Weierstrass preparation theorem. Fix a finite order character $\hat\chi:\Gamma_K^-\rightarrow\bar\Q_p^\times$ of conductor $p^n$ for some integer $n\geq 1$, which is the $p$-adic avatar of a finite order anticyclotomic algebraic Hecke character $\chi:\mathbb{A}_K^\times/K^\times\rightarrow\C^\times$ of conductor $p^n$; thus as before the infinity type of $\chi$ is $(0,0)$ and 
$\hat\chi$ and $\chi$ are related by the reciprocity map.   
For $j=k/2$ in the formula for $L(f/K,\Sigma^{(1)})(\hat\chi)$ we obtain 
\[L^\mathrm{HBL}_{f,k/2}(\hat\chi)=
\frac{p^{kn}\cdot\tau(\hat\chi)\cdot u\cdot \Gamma(k/2)^2}{\alpha_p^{2n}\cdot\mathcal{E}(f)\cdot\mathcal{E}^*(f)}\cdot
\frac{L_K(f_0,\chi,k/2)}{\pi^{k}\cdot\langle f_0,f_0\rangle_N}.\]
Here recall that $u$ is a unit independent of $\chi$. 
Since $k\geq 4$, both $\mathcal{E}(f)$ and $\mathcal{E}^*(f)$ are both $p$-adic units, independent of $\chi$, so we may write 
\[L^\mathrm{HBL}_{f,k/2}(\hat\chi)=
\frac{p^{kn}\cdot\tau(\hat\chi)\cdot u_{\mathrm{HBL}}\cdot \Gamma(k/2)^2}{\alpha_p^{2n}}\cdot
\frac{L_K(f_0,\chi,k/2)}{\pi^{k}\cdot\langle f_0,f_0\rangle_N}\] for a $p$-adic unit $u_\mathrm{HBL}$ which is independent of $\chi$.

On the other hand, by Equation \eqref{eq-Ch-Hs} and Definition \ref{def-Ch-Hs}, 
and using that $e_p(f,\chi)=1$ if $n\geq 1$, we have 
\[
L^\mathrm{CH}_f(\hat{\chi})=\frac{u_{\mathrm{CH}}\cdot\bar\chi(\mathfrak{N})\cdot\Gamma(k/2)^2\cdot p^{n(k-1)}}{\alpha_p^{2n}} 
\cdot
\frac{L_K(f_0,\chi,k/2)}{ \Omega_{f,N^-}}\]
where $u_{\mathrm{CH}}$ is a $p$-adic unit, independent of $\chi$.

Comparing with the expression in Theorem \ref{SU-CH}, we obtain 
\begin{equation}\label{eqHBL1}
L^\mathrm{HBL}_{f,k/2}(\hat\chi)=p^{n}\cdot\tau(\hat\chi)
\cdot
\frac{\tilde{u}}{\bar{\chi}(\mathfrak{N}^+)}\cdot\frac{\Omega_{f_0,N^-}}{(4 \pi)^{k}\cdot \langle f_0,f_0\rangle_N}\cdot 
L^\mathrm{CH}_f(\hat{\chi}),\end{equation}
where 
$\tilde{u}$ is a $p$-adic unit independent of $\chi$. 

We have 
\[\eta_{f_0,N^-}=\frac{(4 \pi)^{k}\cdot\langle f_0,f_0\rangle_N}{\Omega_{f_0,N^-}}\]
 up to $p$-adic units 
by Proposition \ref{congruence}.
The explicit formulas in 
\cite[(5.5b)]{Hida2} (see also \S\ref{sectheta}) show that $\tau(\hat\chi)=p^{-n}$, up to a $p$-adic unit independent of $\chi$; note that the action \cite{BL1} and \cite{BL2} of a matrix $\gamma=\smallmat abcd\in\GL_2(\R)$ with positive determinant on a modular form $g$ is via the formula \[g_{|\ell}\gamma(z)=\det(\gamma)^{\ell-1}(cz+d)^{-\ell}g(\gamma(z)),\] which explains the discrepancy between the root numbers used in \cite{BL1}, \cite{BL2} and \cite{Hida1}, \cite{Hida2} since the last two references use the action introduced in \S\ref{sectheta}). 
 The conclusion follows then from Equation \ref{eqHBL1} in light of the observations made in this paragraph. 
\end{proof}

\section{Comparison between Skinner--Urban and Hida--B\"{u}y\"{u}kboduk--Lei} 

As in \S\ref{SU-CH-section} and \S\ref{Sec-HBL-CH}, 
let $f=\mathbf{f}_\phi\in S_k(\Gamma_0(Np))$ be the ordinary $p$-stabilisation of a newform $f_0$ of level $\Gamma_0(N)$ of even weight $k\geq 4$ and trivial character. Let $K$ be an imaginary quadratic field of discriminant prime to $Np$ as before, so $p$ is split in $K$ and the decomposition $N=N^+N^-$ satisfies the condition that $N^-$ is a square-free product of an odd number of primes. 
Similarly as before, and using the notation of \S\ref{SU-CH-section}, 
define 
\[L^\mathrm{HBL}_{S,f,j}=\mathcal{E}_S\cdot L^\mathrm{HBL}_{f,j}.\]

\begin{theorem}\label{HBL-SU}Under Assumption $\ref{ass}$, if $k\geq 4$ then 
\begin{enumerate}
\item $(\eta_f\cdot L^\mathrm{HBL}_{S,f,k/2})=( L^\mathrm{SU}_{S,f})$, 
\item 
$(\eta_f\cdot L^\mathrm{HBL}_{S,f,k-1})=(\pi_f^\mathrm{ac}(\mathcal L^{S}_{\mathbf{f},K }))$. 
\end{enumerate}

\end{theorem}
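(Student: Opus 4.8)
The plan is to follow the template of the proofs of Theorems \ref{SU-CH} and \ref{Hida-CH}: evaluate both sides at the $p$-adic avatars $\hat\chi$ of a large family of finite-order anticyclotomic Hecke characters $\chi$ of $K$ of conductor $p^n$ (with $n$ large, so of infinity type $(0,0)$), show that the resulting complex numbers agree up to a $p$-adic unit \emph{independent of $\chi$} (after restricting to an infinite subfamily on which the sign in Lemma \ref{gauss} is constant), and then conclude the equality of ideals by the Weierstrass preparation theorem.

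For part (2), the power series $\pi_f^\mathrm{ac}(\mathcal L^{S}_{\mathbf{f},K})$ is governed by \eqref{interpolation SU}. One picks an arithmetic character $\phi$ of $\mathbb{I}_K$ restricting on $\mathbb{I}$ to the morphism attached to $f$ (so $\mathbf{f}_\phi = f$, $k_\phi = k$) and on $\Gamma_K^-$ to $\hat\chi$, trivially on $\Gamma_K^+$; then $\theta_\phi$ is identified with $\hat\chi$, so $\mathfrak{f}_{\theta_\phi} = p^n\mathcal{O}_K$, $\N(\mathfrak{f}_{\theta_\phi}\mathfrak{d}) = |D|\cdot p^{2n}$, $\ord_p(\N(\mathfrak{f}_{\theta_\phi})) = 2n$, $a(p,\mathbf{f}_\phi) = \alpha_p$, $\mathfrak{g}(\theta_\phi) = \pm p^n$ by Lemma \ref{gauss}, and $L^{S}_K(\mathbf{f}_\phi,\theta_\phi,k-1) = L^{S}_K(f_0,\chi,k-1)$ since $p\in S$. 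On the other side, $\eta_f\cdot L^\mathrm{HBL}_{S,f,k-1}(\hat\chi)$ is computed from the $j=k-1$ interpolation formula together with $L^\mathrm{HBL}_{S,f,k-1} = \mathcal{E}_S\cdot L^\mathrm{HBL}_{f,k-1}$, using $\Gamma(k-1) = (k-2)!$, the fact that $\mathcal{E}(f)$ and $\mathcal{E}^*(f)$ are $p$-adic units since $k\geq 4$, and $\tau(\hat\chi) = p^{-n}$ up to a $\chi$-independent $p$-adic unit (as in the proof of Theorem \ref{Hida-CH}). Comparing the two expressions, the factor $L^{S}_K(f_0,\chi,k-1)$ cancels; the $p$-power contributions agree, since $\mathfrak{g}(\theta_\phi)\cdot\N(\mathfrak{f}_{\theta_\phi}\mathfrak{d})^{k-2}$ and $p^{2(k-1)n}\cdot\tau(\hat\chi)$ are both $p^{(2k-3)n}$ up to $p$-adic units; the ratio $(-2\pi i)^{2k-2}/\pi^{2k-2}$ is a $p$-adic unit since $p>3$; and Proposition \ref{congruence}(2), together with $\Omega_f^\pm = \Omega_{f_0}^\pm$ and $\eta_f = \eta_{f_0}$ up to $p$-adic units for the $p$-stabilisation (recorded after Proposition \ref{congruence} and in Remark \ref{rem:p-stabilization}), identifies $\eta_f\cdot\Omega_f^+\Omega_f^-$ with $\langle f_0,f_0\rangle_N$ up to a $p$-adic unit. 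This yields (2).

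Part (1) is handled in the same way, the only new ingredient being the interpolation formula for $L^\mathrm{SU}_{S,f} = \pi_f^\mathrm{ac}(\mathcal L^{S,\dagger}_{\mathbf{f},K})$. Since $\mathrm{Tw}_{\mathbf{\Theta}^{-1}}$ twists the $\Gamma_K$-variable by $\mathbf{\Theta}^{-1}$, and $\mathbf{\Theta}$ is trivial on $\Gamma_K^-$ while it re-centers the cyclotomic (weight) direction by half the weight, $L^\mathrm{SU}_{S,f}$ interpolates the \emph{central} values $L^{S}_K(f_0,\chi,k/2)$, via a formula of the same shape as \eqref{interpolation SU} but with $k_\phi - 1$ replaced by $k_\phi/2$ and $((k_\phi - 2)!)^2\cdot\N(\mathfrak{f}_{\theta_\phi}\mathfrak{d})^{k_\phi - 2}$ replaced by $\Gamma(k_\phi/2)^2\cdot\N(\mathfrak{f}_{\theta_\phi}\mathfrak{d})^{k_\phi/2 - 1}$ (this is Howard's re-centering, see \cite{howard-invmath} and \cite[$\S$2]{howard-annalen}; for $k=2$ the twist is trivial and one recovers the computation in the proof of Theorem \ref{SU-CH}). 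Comparing with the $j=k/2$ interpolation formula for $L^\mathrm{HBL}_{f,k/2}$, the $\Gamma$-factors $\Gamma(k/2)^2$ now appear on both sides, the $p$-powers again match via $\mathfrak{g}(\theta_\phi) = \pm p^n$ and $\tau(\hat\chi) = p^{-n}$ up to $p$-adic units, and the periods are matched by Proposition \ref{congruence}(2) exactly as above; alternatively, the resulting identity $(L^\mathrm{SU}_{S,f}) = (L^\mathrm{CH}_{S,f})$ for $k\geq 4$, combined with Theorem \ref{Hida-CH} and $\eta_{f,N^-} = \eta_f$ up to a $p$-adic unit, gives part (1).

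I expect the main obstacle to be making the interpolation formula for the critical twist precise at weights $k\geq 4$: the relevant point of the three-variable Skinner--Urban $L$-function is not arithmetic in the sense of \S\ref{subsec:skinner-urban-hida-families} (its cyclotomic coordinate is shifted by half the weight), so one must carefully track the fudge factor produced by $\mathrm{Tw}_{\mathbf{\Theta}^{-1}}$ when specialising $\mathbf{f}$ to $f$ and restricting to the anticyclotomic line. A secondary point is the bookkeeping of the $S$-imprimitive Euler factors: the local factors at $\mathfrak{p}$ and $\bar{\mathfrak{p}}$ are trivial because $\chi$ is ramified there (so the $p$-power count is as above), while at the remaining primes of $S$ both sides interpolate the same $S$-incomplete $L$-value by construction, so the common factor $\mathcal{E}_S$ contributes nothing to the ratio — but verifying this uses that $\chi(v)$ is a $p$-power root of unity for $v\nmid p$ together with the ramification hypotheses of Assumption \ref{ass} at the primes dividing $N^+N^-D$.
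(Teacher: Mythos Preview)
Your treatment of part~(2) is essentially the paper's own argument: evaluate both sides at finite-order anticyclotomic characters of conductor $p^n$, use Lemma~\ref{gauss} for $\mathfrak{g}(\theta_\phi)$, the computation $\tau(\hat\chi)=p^{-n}$ up to a $\chi$-independent unit, the fact that $\mathcal E(f),\mathcal E^*(f)$ are units for $k\ge 4$, Proposition~\ref{congruence}(2) to match periods with $\eta_f$, and conclude by Weierstrass preparation. This part is fine.

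The gap is in part~(1). You correctly diagnose the obstacle: the point of $\mathbb{I}_K$ picked out by the critical twist (cyclotomic coordinate shifted by $\mathbf\Theta^{-1}$, anticyclotomic coordinate $\hat\chi$) is \emph{not} an arithmetic character in the sense of \S\ref{subsec:skinner-urban-hida-families}, so formula~\eqref{interpolation SU} does not apply directly, and one cannot simply write down a ``formula of the same shape with $k_\phi-1$ replaced by $k_\phi/2$'' without further justification. Your alternative route through Chida--Hsieh does not close the gap either, since it presupposes the identity $(L^{\mathrm{SU}}_{S,f})=(L^{\mathrm{CH}}_{S,f})$ for $k\ge 4$, which is exactly what the problematic direct comparison was meant to establish (Theorem~\ref{SU-CH} is only for $k=2$).

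The paper sidesteps this entirely: it first observes that (1) and (2) are \emph{equivalent} under $\mathrm{Tw}_{\mathbf\Theta^{-1}}$. The point is that $\mathbf\Theta$ is trivial on $\Gamma_K^-$, so after specialising $\mathbb{I}\to\mathcal O$ the twist amounts to evaluating the $\Gamma_K^+$-variable at $(1+p)^{-(k-2)/2}$ rather than at $1$; on the HBL side this is precisely the shift from $j=k-1$ to $j=k/2$ (both are a cyclotomic shift by $k/2-1$). Hence once (2) is proved explicitly, (1) follows. The paper also cites \cite[Remark~2.2 and Theorem~3.20]{BL1} for an independent argument for (1). Either of these routes gives a complete proof of (1) without ever needing an interpolation formula for $\mathcal L^{S,\dagger}_{\mathbf f,K}$ at non-arithmetic points, which is the missing idea in your proposal.

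A minor remark: your final paragraph about $\mathcal E_S$ is unnecessary. Both $L^{\mathrm{SU}}_{S,f}$ and $L^{\mathrm{HBL}}_{S,f,j}$ are $S$-imprimitive by construction, so there is no Euler-factor discrepancy to track in the ratio.
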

\begin{proof}
First observe that two formulas are equivalent under twisting by
$\mathrm{Tw}_{\mathbf{\Theta}^{-1}}(\gamma)$. 
The first formula is proved in \cite{BL1}, where one first shows in Remark 2.2 that the values on anticyclotomic characters $\chi$ of $L^\mathrm{SU}_{S,f}$ and  $L^\mathrm{HBL}_{S,f,k/2}$ differ by a $p$-adic unit, and then the proof of Theorem 3.20 shows that $L^\mathrm{SU}_{S,f}$ divides $L^\mathrm{HBL}_{S,f,k/2}$. An alternative proof of the second equivalent statement can be obtained by an explicit comparison of the interpolation formulas, as in the proof of Theorems \ref{Hida-CH} and 
\ref{SU-CH}. For the reader's convenience, and to fully explain the presence of the congruence ideal which was missing in \cite{BL1}, we offer a complete proof.

As in the proof of Theorems \ref{SU-CH} and \ref{Hida-CH}, we show the equality of the two $p$-adic $L$-functions when evaluated at infinitely many characters $\chi$, up to a $p$-adic unit which is independent of $\chi$, and then the result follows from the Weierstrass preparation theorem. Fix a finite order character $\hat\chi:\Gamma_K^-\rightarrow\bar\Q_p^\times$ of conductor $p^n$ for some integer $n\geq 1$, which is the $p$-adic avatar of a finite order anticyclotomic algebraic Hecke character $\chi:\mathbb{A}_K^\times/K^\times\rightarrow\C^\times$ of conductor $p^n$. 
Recall from the proof of Theorem \ref{Hida-CH} that  
$\tau(\hat\chi)=p^{-n}$, up to a $p$-adic unit independent of $\chi$, and 
that since $k\geq 4$ both $\mathcal{E}(f)$ and $\mathcal{E}^*(f)$ are $p$-adic units, independent of $\chi$. Thus for $j=k-1$ in the formula for $L(f/K,\Sigma^{(1)})(\hat\chi)$,
we obtain 
\[ L^\mathrm{HBL}_{S,f,k-1}(\hat\chi)=
\frac{p^{n(2k-3)}\cdot u_\mathrm{HBL}\cdot \Gamma(k-1)^2}{\alpha_p^{2n}}\cdot
\frac{L^S_K(f_0,\chi,k-1)}{\pi^{2k-2}\cdot\langle f_0,f_0\rangle_N},\]
where $u_\mathrm{HBL}$ is a $p$-adic unit independent of $\chi$. 
By Lemma \ref{gauss}, 
$\mathfrak{g}(\theta_\phi)=\pm p^n$, so, after setting 
\[u_\mathrm{SU}(\hat\chi)=u_f\cdot{\mathfrak{g}(\theta_\phi)}/{p^n}\]
we see that $u_\mathrm{SU}(\hat\chi)=\pm u_f$, so $u_\mathrm{SU}(\hat\chi)$ 
is a $p$-adic unit, which depends on $\chi$ only up to a sign. Therefore we have: 
\[\pi_f^\mathrm{ac}(\mathcal L^{S}_{\mathbf{f},K })(\chi)=
\frac{u_\mathrm{SU}(\hat\chi)\cdot p^{n(2k-3)}\cdot \Gamma(k-1)^2}{\alpha_p^{2n}}
\cdot \frac{L^S_K(f_0,\chi,k-1)}{\pi^{2k-2}\cdot\Omega_{f_0}^+\cdot\Omega_{f_0}^-} .\]
Recall that, by Proposition \ref{congruence}, 
\[\eta_f=\frac{\langle f_0,f_0\rangle_N}{\Omega_{f_0}^+\cdot\Omega_{f_0}^-}\]
up to $p$-adic units. 
Comparing these formulas, 
we obtain 
\[{\pi_f^\mathrm{ac}(\mathcal L^{S}_{\mathbf{f},K })}(\chi)={{u}(\chi)}\cdot\eta_f\cdot L^\mathrm{HBL}_{S,f,k-1}(\chi)\]
for some unit $u(\chi)$ which depends on $\chi$ only up to sign. We now use the same argument as in the proof of Theorem \ref{SU-CH}. We have infinitely many Hecke characters $\chi$ as above such that 
$u(\chi)=\epsilon$ for at least one choice of $\epsilon\in\{\pm1\}$, thus $u(\chi)$ 
is a unit independent of $\chi$, for $\chi$ in an infinite set of characters.
The values ${\pi_f^\mathrm{ac}(\mathcal L^{S}_{\mathbf{f},K })}(\chi)$ and ${{u}(\chi)}\cdot\eta_f\cdot L^\mathrm{HBL}_{S,f,k-1}(\chi)$
are then equal for infinitely many anticyclotomic Hecke characters $\chi$, and therefore, 
using the Weierstrass preparation theorem, we see that 
${\pi_f^\mathrm{ac}(\mathcal L^{S}_{\mathbf{f},K })}(\chi)$ and ${{u}(\chi)}\cdot\eta_f\cdot L^\mathrm{HBL}_{S,f,k-1}(\chi)$ are equal in $\mathcal{O}_L\pwseries{\Gamma_K^-}$, completing the proof.
\end{proof}

\section{Quaternionic two-variable $p$-adic $L$-functions}\label{quaternionic section}
As in $\S$\ref{subsec:skinner-urban-hida-families}, let $\mathbf{f} \in \mathbb{I} \pwseries{ q }$ be the primitive branch of the Hida family of tame level $N$, fixed as in \S \ref{subsec:main-result}. 
In \cite{LV-MM}, the second-named author and Vigni introduced a two-variable anticyclotomic $p$-adic $L$-function
$L^\mathrm{LV}_\mathbb{I}\in \mathbb{I}\pwseries{\Gamma_K^-}$ by means of big Gross points, an analogue in the definite setting of big Heegner points first introduced by Howard \cite{howard-invmath}. 
The function $L^\mathrm{LV}$ has been further studied in \cite{cas-longo} and \cite{CKL}, and we recall now some of its properties.

Let 
$\phi:\mathbb{I}\rightarrow\mathcal{O}$ be an arithmetic morphism corresponding to the $p$-stabilization $f$ of a $p$-ordinary newform $f_0  \in S_k(\Gamma_0(N))$, and
set \[L^\mathrm{LV}_f=\pi_f(L^\mathrm{LV}_\mathbb{I})\in \mathcal{O}\pwseries{\Gamma_K^-},\] where 
$\pi_f:\mathbb{I}\pwseries{\Gamma_K^-}\rightarrow\mathcal{O}\pwseries{\Gamma_K^-}$ is the identity map on $\Gamma_K^-$ and the map 
$\phi$ on $\mathbb{I}$. Then by \cite[Theorem 3.14]{CKL} we have $(L^\mathrm{LV}_f)=(L^\mathrm{CH}_f)$. 
For each prime $v\mid S$ in $K$,  
define the Euler factor 
\[\mathbb{E}_v(X)=\det\left(\mathrm{Id}-X\frob_v|(\mathbb{T}^\dagger)^{I_v}\right)\]
in $\mathbb{I}[X]$, where $\frob_v$ is a geometric Frobenius at $v$, $I_v$ is the inertia subgroup at $v$, and $\mathbb{T}^\dagger=\mathbb{T}\otimes\Theta^{-1}$ is the central critical twist of Hida's big Galois representation $\mathbb{T}$ attached to $\mathbf{f}$. 
{Then set $\mathbb{E}_v=\mathbb{E}_v(q_v^{-1})$, where $q_v$ is the cardinality of the residue field at $v$,} and define $\mathbb{E}_\ell=\prod_{v\mid \ell}\mathbb{E}_v$ and 
$\mathbb{E}_S=\prod_{\ell\in S}\mathbb{E}_\ell$. Finally define  
\[L^\mathrm{LV}_{S,\mathbb{I}}=\mathbb{E}_S\cdot L^\mathrm{LV}_{\mathbb{I}}.\] 

Theorem \ref{Hida-CH} and Theorem \ref{HBL-SU} show that 
\begin{equation}\label{SU-LV1}
(L^\mathrm{SU}_{S,f})=
((\eta_{f_0}/\eta_{f_0,N^-})\cdot L^\mathrm{LV}_{S,f}).\end{equation}

Let $\eta_\mathbb{I}$ be Hida's congruence ideal defined in \cite{hida-AJM110}, which satisfies the property that \[\phi(\eta_{\mathbb{I}})=\eta_f = \eta_{f_0},\] up to $p$-adic units, for all $\phi:\mathbb{I}\rightarrow\mathcal{O}$ of weight $k$, level $\Gamma_0(Np)$ and trivial character.  
We also denote $\eta_{\mathbb{I},N^-}$ the congruence ideal relative to the $N^-$-new quotient of the Hida-Hecke algebra, defined in 
\cite[Definition 4.12]{Hsieh}, 
which satisfies the property that \[\phi(\eta_{\mathbb{I},N^-})=\eta_{f,N^-} =\eta_{f_0,N^-},\] up to $p$-adic units, for all $\phi:\mathbb{I}\rightarrow\bar\Q_p$ of weight $k$, level $\Gamma_0(Np)$ and trivial character. The results explained in \cite[\S 7]{Hsieh} show that $\eta_\mathbb{I}/\eta_{\mathbb{I},N^-}$ is a $\mathbb{I}$-adic unit under Assumptions \ref{ass}. We thus have in particular that 
\begin{equation}\label{SULV2}
(L^\mathrm{SU}_{S,f})=(L^\mathrm{LV}_{S,f}).\end{equation}

\begin{theorem}\label{main} Under Assumption $\ref{ass}$, 
$(L^\mathrm{SU}_{S,\mathbb{I}})=(L^\mathrm{LV}_{S,\mathbb{I}})$ as ideals in 
$\mathbb{I}\pwseries{\Gamma_K^-}$.  
\end{theorem}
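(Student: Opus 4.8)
The plan is to deduce Theorem~\ref{main} from the specialisation‑wise identity \eqref{SULV2} together with the fact that $\mathbb{I}$ is a complete local ring. Put $R:=\mathbb{I}\pwseries{\Gamma_K^-}$; since $\Gamma_K^-\simeq\Z_p$ and $\mathbb{I}$ is local with maximal ideal $\mathfrak{m}_\mathbb{I}$, the ring $R$ is again local, with maximal ideal $\mathfrak{m}_R=\mathfrak{m}_\mathbb{I}R+I_{\Gamma_K^-}$ (the augmentation ideal), and both $L^\mathrm{SU}_{S,\mathbb{I}}$ and $L^\mathrm{LV}_{S,\mathbb{I}}$ are nonzero, their arithmetic specialisations being nonzero multiples of Chida--Hsieh $p$-adic $L$-functions. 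I claim it is enough to prove that one of the two divides the other in $R$. So suppose $L^\mathrm{SU}_{S,\mathbb{I}}=h\cdot L^\mathrm{LV}_{S,\mathbb{I}}$ with $h\in R$, and fix a single arithmetic morphism $\phi\colon\mathbb{I}\to\mathcal{O}$ of weight $k\ge4$, level $\Gamma_0(Np)$ and trivial character, so that $f=\mathbf{f}_\phi$ is the ordinary $p$-stabilisation of a newform $f_0$ of level $\Gamma_0(N)$; let $\pi_f\colon R\to\mathcal{O}\pwseries{\Gamma_K^-}$ be the induced map (the identity on $\Gamma_K^-$ and $\phi$ on $\mathbb{I}$) as in \S\ref{quaternionic section}. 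Then $\pi_f(L^\mathrm{SU}_{S,\mathbb{I}})=L^\mathrm{SU}_{S,f}$ — this is the Remark following the definition of the one-variable specialisation, using that $\mathrm{Tw}_{\mathbf{\Theta}^{-1}}$ and the projections $\pi_\mathbb{I}^{\mathrm{ac}}$, $\pi_f^{\mathrm{ac}}$ are by construction compatible with $\phi$, the character $\mathbf{\Theta}$ specialising to $\Theta_\p$ — and likewise $\pi_f(L^\mathrm{LV}_{S,\mathbb{I}})=L^\mathrm{LV}_{S,f}$, because $\pi_f(L^\mathrm{LV}_\mathbb{I})=L^\mathrm{LV}_f$ by definition and $\phi(\mathbb{E}_v)=\mathcal{E}_v$ in $\mathcal{O}\pwseries{\Gamma_K^-}$, both computing $\det(\mathrm{Id}-q_v^{-1}\frob_v)$ on the $I_v$-invariants of the (central critically, and anticyclotomically, twisted) Galois representation of $f$, since $\mathbb{T}^\dagger$ specialises at $\ker(\phi)$ to that twist.

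Granting these compatibilities, $L^\mathrm{SU}_{S,f}=\pi_f(h)\cdot L^\mathrm{LV}_{S,f}$ in the domain $\mathcal{O}\pwseries{\Gamma_K^-}$; as $L^\mathrm{LV}_{S,f}\ne0$ and $(L^\mathrm{SU}_{S,f})=(L^\mathrm{LV}_{S,f})$ by \eqref{SULV2}, this forces $\pi_f(h)\in\mathcal{O}\pwseries{\Gamma_K^-}^{\times}$. Now $\phi$ is a local homomorphism of local rings (the inverse image of $\mathfrak{m}_\mathcal{O}$ is a maximal ideal of $\mathbb{I}$, hence $\mathfrak{m}_\mathbb{I}$), so $\pi_f$ is local and induces an injection on residue fields; therefore $h\in\mathfrak{m}_R$ would give $\pi_f(h)\in\mathfrak{m}_{\mathcal{O}\pwseries{\Gamma_K^-}}$, a contradiction. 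Hence $h\in R^{\times}$ and $(L^\mathrm{SU}_{S,\mathbb{I}})=(L^\mathrm{LV}_{S,\mathbb{I}})$, as desired. Note that a single good specialisation suffices here: the Zariski-density of the arithmetic primes is not needed once the divisibility is in hand, and — as the example of two coprime power series shows — it is not by itself enough to force that divisibility.

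It thus remains to establish, say, $L^\mathrm{LV}_{S,\mathbb{I}}\mid L^\mathrm{SU}_{S,\mathbb{I}}$ in $R$, and this is the main obstacle. The natural approach is to run the $\mathbb{I}$-adic analogues of the comparisons of \S\S\ref{Sec-HBL-CH}--\ref{quaternionic section}. The Hida--B\"uy\"ukboduk--Lei $p$-adic $L$-function is available in $\mathbb{I}$-adic form over Hida families (\cite[Appendix~B]{BL2}); the divisibility proved in \cite[Theorem~3.20]{BL1} lifts to a relation between its two-variable anticyclotomic specialisation $\mathbf{L}^\mathrm{HBL}_{S,\mathbb{I}}$ and $L^\mathrm{SU}_{S,\mathbb{I}}$, and the interpolation-formula argument behind Theorem~\ref{Hida-CH}, carried out over the family, identifies $\eta_{\mathbb{I},N^-}\cdot\mathbf{L}^\mathrm{HBL}_{S,\mathbb{I}}$ up to units in $R$ with $\mathbb{E}_S$ times a big theta element, which in turn is associated to $L^\mathrm{LV}_\mathbb{I}$ by \cite{CKL}. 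Chaining these, together with $\eta_\mathbb{I}/\eta_{\mathbb{I},N^-}\in\mathbb{I}^{\times}$ (recalled in \S\ref{quaternionic section}), yields $L^\mathrm{SU}_{S,\mathbb{I}}\mid(\eta_\mathbb{I}/\eta_{\mathbb{I},N^-})\cdot L^\mathrm{LV}_{S,\mathbb{I}}$, hence the divisibility required above — indeed an equality of ideals, the previous paragraph then only pinning down the cofactor. A more self-contained alternative, in the spirit of the proofs of Theorems~\ref{SU-CH}, \ref{Hida-CH} and \ref{HBL-SU}, is to re-run those comparisons directly in the two-parameter family: evaluate $L^\mathrm{SU}_{S,\mathbb{I}}$ and $L^\mathrm{LV}_{S,\mathbb{I}}$ at pairs $(\phi,\hat\chi)$ with $\phi$ arithmetic of weight $k\ge4$ and $\hat\chi$ a finite order anticyclotomic character, check — using Lemma~\ref{gauss}, the $p$-adic interpolation of the relevant canonical and Gross periods, and $\eta_\mathbb{I}/\eta_{\mathbb{I},N^-}\in\mathbb{I}^{\times}$ — that the ratio of the two values at each such pair is the value of one fixed unit of $R$, and conclude from the Zariski-density of such pairs that $L^\mathrm{SU}_{S,\mathbb{I}}$ equals that unit times $L^\mathrm{LV}_{S,\mathbb{I}}$. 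In either route the crux is the same: promoting the specialisation-wise comparison to an $\mathbb{I}$-adic one, i.e.\ controlling that the discrepancy units interpolate to a genuine element of $R^{\times}$, for which the decisive inputs are the $\mathbb{I}$-adicity of the congruence-ideal ratio $\eta_\mathbb{I}/\eta_{\mathbb{I},N^-}$ and of the periods.
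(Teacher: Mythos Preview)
Your proposal correctly isolates the crux --- promoting the specialisation-wise identity \eqref{SULV2} to an equality of ideals in $R=\mathbb{I}\pwseries{\Gamma_K^-}$ --- and your reduction ``divisibility $+$ one good specialisation $\Rightarrow$ equality of ideals'' is valid. But the divisibility itself is left as a sketch. Both routes you propose (lifting \cite[Theorem~3.20]{BL1} to the family, or matching interpolation formulas at pairs $(\phi,\hat\chi)$ and invoking Zariski density) require precisely what you concede is missing: that the discrepancy units, which in Theorems~\ref{SU-CH}, \ref{Hida-CH}, \ref{HBL-SU} are only pinned down \emph{up to sign} and \emph{a posteriori} for each fixed $\phi$, interpolate to a single element of $R^\times$. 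Nothing in the paper establishes an $\mathbb{I}$-adic interpolation of the canonical periods $\Omega_{f_0}^{\pm}$ or of the units $u_{\mathbf{f}_\phi}$, $u_{\mathrm{SU}}(\hat\chi)$; without that, neither sketch closes.

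The paper's proof avoids this difficulty altogether by a different mechanism: Weierstrass preparation over the complete local ring $\mathbb{I}$. One first checks (using vanishing of the $\mu$-invariant of $L^\mathrm{SU}_{S,f}$ and $L^\mathrm{LV}_{S,f}$) that neither element lies in $\mathfrak{m}_\mathbb{I}\cdot R$, so each factors as a unit times a \emph{distinguished polynomial} $G^\mathrm{SU}$, $G^\mathrm{LV}\in\mathbb{I}[X]$ (monic, lower coefficients in $\mathfrak{m}_\mathbb{I}$). For every arithmetic $\phi$ the images $\phi(G^\mathrm{SU})$ and $\phi(G^\mathrm{LV})$ are monic polynomials in $\mathcal{O}[X]$ generating the same ideal by \eqref{SULV2}; two monic polynomials that differ by a unit are equal. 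Thus $\phi(G^\mathrm{SU})=\phi(G^\mathrm{LV})$ for all arithmetic $\phi$, and Zariski density of arithmetic primes in $\mathbb{I}$ (proved in the paper via going-down) gives $G^\mathrm{SU}=G^\mathrm{LV}$. The point is that passing to the distinguished polynomial upgrades ``same ideal'' to ``same element'' at each specialisation, so no interpolation of units is ever needed --- this is exactly the step your proposal lacks.
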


\begin{proof} If one of $L^\mathrm{SU}_{S,\mathbb{I}}$ or $L^\mathrm{LV}_{S,\mathbb{I}}$ is a unit, 
the result follows easily from \eqref{SULV2} combined with \cite[Lemma 3.2]{SU}, so assume in the following that no one of $L^\mathrm{SU}_{S,\mathbb{I}}$ and $L^\mathrm{LV}_{S,\mathbb{I}}$ is a unit.  

Recall the following general form of the Weierstrass preparation theorem (\cite[\S 3, Proposition 6]{Bourbaki}).  
Let $R$ be a complete local ring with maximal ideal $\mathfrak{m}_R$. 
We say that a polynomial $g\in R[X]$ is \emph{distinguished} if $g$ is monic of degree $d$ and all the coefficients of $X^i$ are in $\mathfrak{m}_R$ for all $i<d$. 
Define 
$\Lambda= R[\![X]\!]$, and let 
 $f=\sum_{i\geq 0}a_iX^i\in \Lambda$. 
Suppose that $a_d$ is a unit in $R$ and $a_0,a_1,\dots,a_{d-1}\in \mathfrak{m}_R$  for some integer $d\geq 0$; if this condition is satisfied, we write  $f\not\in\mathfrak{m}_R$ and put $\deg(f)=d$. 
Then there there exist unique elements $u, g\in \Lambda$ such that $u$ is a unit in $\Lambda$ and $g$ is distinguished polynomial of degree $d$ such that $f=ug$. 

We apply the Weierstrass preparation theorem recalled above with $R=\mathbb{I}$ and 
$\Lambda=\mathbb{I}\pwseries{\Gamma_K^-}$. 
First, note that if $L^\mathrm{SU}_{S,\mathbb{I}}$ belong to $\mathfrak{m}_\mathbb{I}$, the maximal ideal of $\mathbb{I}$, then for any arithmetic morphism the $\mu$-invariant of $L^\mathrm{SU}_{S,f}$ would be non-zero, and this is non the case, therefore $L^\mathrm{SU}_{S,\mathbb{I}}\not \in\mathfrak{m}_{\mathbb{I}}$; 
similarly, $L^\mathrm{LV}_{S,\mathbb{I}}\not \in\mathfrak{m}_{\mathbb{I}}$. Since $L^\mathrm{SU}_{S,\mathbb{I}}$ and $L^\mathrm{LV}_{S,\mathbb{I}}$ are not units, then there are distinguished polynomials $G^\mathrm{SU}$ and $G^\mathrm{LV}$ such that $(L^\mathrm{SU}_{S,\mathbb{I}})=(G^\mathrm{SU})$ 
and $(L^\mathrm{LV}_{S,\mathbb{I}})=(G^\mathrm{LV})$.
Pick any arithmetic morphism $\phi:\mathbb{I}\rightarrow\bar\Q_p$; since 
$\phi(G^\mathrm{SU})$ and $\phi(G^\mathrm{LV})$ differ by a $p$-adic unit, and both are monic polynomials, we see that they are equal. We thus see that for each arithmetic morphism $\phi$, we have 
$\phi(G^\mathrm{SU})=\phi(G^\mathrm{LV})$. 
It remains to show that the coefficients of $G^\mathrm{SU}$ and $G^\mathrm{LV}$ are equal, and for this it is enough to show that if
for an element $x\in \mathbb{I}$ we have $\phi(x)=0$ for all arithmetic morphisms $\phi:\mathbb{I}\rightarrow\bar\Q_p$, then $x=0$. For this, let $C=\cap_{\phi}\ker(\phi)$ be the intersection of the kernels $\ker(\phi)$ of $\phi$, where $\phi:\mathbb{I}\rightarrow\bar\Q_p$ varies over all arithmetic morphism. It is clearly enough to show that $C$ is trivial. For each arithmetic morphism $\phi:\mathbb{I}\rightarrow\bar\Q_p$, 
denote $\phi_{|\Lambda_W}:\Lambda_W\rightarrow\bar\Q_p$ its restriction to 
$\Lambda_W$. We first note that the intersection of the kernels
$\ker(\phi_{|\Lambda_W})$ for $\phi: \mathbb{I}\rightarrow\bar\Q_p$ an arithmetic morphism is trivial. To show this, note that 
any morphism $\phi_k:\Lambda_W\rightarrow\bar\Q_p$ which takes $W$ to 
$\zeta_{p^k} - 1$ for $k\equiv 2\pmod{p-1}$, where $\zeta_{p^k}$ is a $p^k$-th primitive root of unity, arises as restriction of an arithmetic morphism $\phi:\mathbb{I}\rightarrow\bar\Q_p$; since one immediately checks that $\cap_{k}\ker(\phi_k)=0$ (where the intersection is over all integers $k\equiv 2\pmod {p-1}$) we also see that $\cap_\phi  \ker(\phi_{|\Lambda_W})=0$. If $C\neq 0$, 
let $\mathfrak{P}\neq 0$ be a height one prime ideal dividing $(C)$. Then by the going down theorem (which holds because $\mathbb{I}$ is integral domain and $\Lambda_W$ is an integrally closed domain) there is an ideal $\mathfrak{p}\neq 0$ such
$\mathfrak{p}=\mathfrak{P}\cap \Lambda_K$. We thus see that 
$\mathfrak{p}\mid \cap_\phi\ker(\phi_{|\Lambda_W})=0$, a contradiction. Hence, $C=0$ and the proof is complete.  
\end{proof}

\bibliographystyle{amsplain}
\bibliography{paper}
\end{document}